\numberwithin{equation}{section}
\newtheorem{theorem}{Theorem}
\newtheorem{proposition}[theorem]{Proposition}
\newtheorem{lemma}[theorem]{Lemma}
\newtheorem{corollary}[theorem]{Corollary}
\theoremstyle{remark}
\newtheorem{remark}[theorem]{Remark}
\newcounter{FNC}[page]
\def\fauxfootnote#1{{\addtocounter{FNC}{2}$^\fnsymbol{FNC}$%
     \let\thefootnote\relax\footnotetext{$^\fnsymbol{FNC}$\Magenta{#1}}}}
\newcommand{\defcolor}[1]{\Maroon{#1}} 
\newcommand{\demph}[1]{\defcolor{{\sl #1}}}
\newcommand{\C}{{\mathbb C}}
\newcommand{\R}{{\mathbb R}}
\newcommand{\Q}{{\mathbb Q}}
\renewcommand{\P}{{\mathbb P}}
\newcommand{\url}[1]{{\tt #1}}
\newcommand{\Fdot}{F_{\bullet}}
\newcommand{\blambda}{\boldsymbol{\lambda}}
\newcommand{\Gr}{\mbox{\rm Gr}}
\newcommand{\Id}{\mbox{\rm Id}}
\newcommand{\Mat}{\mbox{\rm Mat}}
\newcommand{\rowspace}{\mbox{\rm row space}}
\newcommand{\rank}{\mbox{\rm rank}}
\newcommand{\Span}{\mbox{\rm span}}
\newcommand{\Vdm}{\mbox{\rm Vdm}}
\DeclareRobustCommand{\I}{\includegraphics{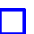}}
\DeclareRobustCommand{\Ig}{\includegraphics{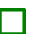}}
\DeclareRobustCommand{\Is}{\includegraphics{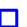}}
\DeclareRobustCommand{\IIs}{\includegraphics{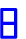}}
\DeclareRobustCommand{\IIIs}{\includegraphics{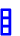}}
\DeclareRobustCommand{\III}{\includegraphics{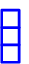}}
\DeclareRobustCommand{\TI}{\includegraphics{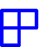}}
\DeclareRobustCommand{\TIs}{\includegraphics{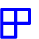}}
\DeclareRobustCommand{\TIgs}{\includegraphics{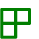}}
\DeclareRobustCommand{\TT}{\includegraphics{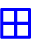}}
\DeclareRobustCommand{\TTs}{\includegraphics{pictures/22s.eps}}
\DeclareRobustCommand{\TTT}{\includegraphics{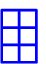}}
\DeclareRobustCommand{\Th}{\includegraphics{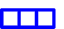}}
\DeclareRobustCommand{\Ths}{\includegraphics{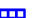}}
\DeclareRobustCommand{\ThI}{\includegraphics{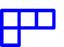}}
\DeclareRobustCommand{\ThIs}{\includegraphics{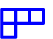}}
\DeclareRobustCommand{\ThIIs}{\includegraphics{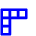}}
\DeclareRobustCommand{\ThIIb}{\includegraphics{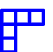}}
\DeclareRobustCommand{\ThII}{\includegraphics{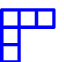}}
\DeclareRobustCommand{\ThTh}{\includegraphics{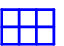}}
\DeclareRobustCommand{\ThThT}{\includegraphics{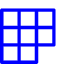}}
\DeclareRobustCommand{\ThThTs}{\includegraphics{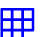}}
\DeclareRobustCommand{\ThThThs}{\includegraphics{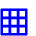}}
\DeclareRobustCommand{\ThThTh}{\includegraphics{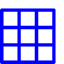}}
\DeclareRobustCommand{\F}{\includegraphics{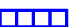}}
\DeclareRobustCommand{\Fs}{\includegraphics{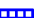}}
\DeclareRobustCommand{\FF}{\includegraphics{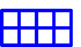}}
\DeclareRobustCommand{\V}{\includegraphics{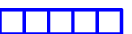}}
\DeclareRobustCommand{\minusIt}{\includegraphics{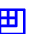}}
\DeclareRobustCommand{\minusIs}{\includegraphics{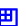}}
\newcommand{\Skew}[8]{\begin{picture}(41,31)(-3,-2.5)
 \put(-3,-2.5){\includegraphics{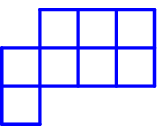}}
                        \put(11,22){\small$#1$}\put(22,22){\small$#2$}\put(33,22){\small$#3$}
 \put( 0,11){\small$#4$}\put(11,11){\small$#5$}\put(22,11){\small$#6$}\put(33,11){\small$#7$}
 \put( 0, 0){\small$#8$}
\end{picture}}
\newcommand{\TheFo}[6]{\begin{picture}(61,21)(-3,-2.5)
 \put(-3,-2.5){\includegraphics{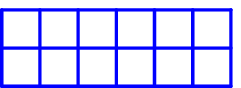}}
  \put( 0,11){\small$1$}\put(11,11){\small$1$}\put(22,11){\small$1$}
  \put(33,11){\small$#1$}\put(44,11){\small$#2$}\put(55,11){\small$#3$}
  \put( 0, 0){\small$#4$}\put(11, 0){\small$#5$}\put(22, 0){\small$#6$}
  \put(33, 0){\small$4$}\put(44, 0){\small$4$}\put(55, 0){\small$4$}
\end{picture}}
\title[Lower Bounds in Real Schubert Calculus]
 {Lower bounds in real Schubert Calculus}
\author{Nickolas Hein}
\address{Nickolas Hein \\
         Department of Mathematics\\
         University of Nebraska at Kearney\\
         Kearney\\
         Nebraska \ 68849\\
         USA}
\email{heinnj@unk.edu}
\urladdr{http://www.unk.edu/academics/math/faculty/About\_Nickolas\_Hein/}
\author{Christopher J. Hillar} 
\address{Christopher J. Hillar\\ Redwood Center for Theoretical Neuroscience \\ 
        University of California, Berkeley \\ 
         Berkeley, CA 94720}
\email{chillar@msri.org} 
\urladdr{http://www.msri.org/people/members/chillar/} 
\author{Frank Sottile}
\address{Frank Sottile \\
         Department of Mathematics\\
         Texas A\&M University\\
         College Station\\
         Texas \ 77843\\
         USA}
\email{sottile@math.tamu.edu}
\urladdr{\url{http://www.math.tamu.edu/~sottile}}
\thanks{Research of Sottile and Hein supported in part by NSF grants DMS-1001615 and  DMS-0922866.  Hillar supported by NSF grant IIS-1219212.}
\thanks{The work represents part of Hein's 2013 Ph.D.\ thesis from Texas A\&M University.}
\thanks{This material is based upon work supported by the National Science 
Foundation under Grant No. 0932078 000, while Sottile was in 
residence at the Mathematical Science Research Institute (MSRI) in 
Berkeley, California, during the winter semester of 2013.}
\thanks{Some computations done on computers purchased with NSF SCREMS grant DMS-0079536.}
\keywords{Schubert calculus, Shapiro Conjecture, Lower bounds}
\subjclass[2010]{14N15, 14P99}
\begin{document}

%
\begin{abstract}
 We describe a large-scale computational experiment to study 
 structure in the numbers of real solutions to osculating instances of Schubert
 problems. 
 This investigation uncovered Schubert problems whose computed numbers of real solutions
 variously exhibit nontrivial upper bounds, lower bounds, gaps, and a congruence modulo
 four. 
 We present a family of Schubert problems, one in each Grassmannian, and prove their
 osculating instances have the observed lower bounds and gaps.
\end{abstract}

\maketitle

%
\section*{Introduction}
A remarkable recent story in mathematics was the proof of the Shapiro Conjecture 
(in real algebraic geometry) by Mukhin, Tarasov, and
Varchenko~\cite{MTV09} using methods from integrable systems.
Its simplest form involves the Wronski map, which sends a $k$-dimensional complex linear subspace
of univariate polynomials of degree $n{-}1$ to its Wronskian, a polynomial of degree
$k(n{-}k)$. 
In this context, the Mukhin-Tarasov-Varchenko Theorem states that if a polynomial
$w(t)$ of degree $k(n{-}k)$ has all of its roots real, then {\it every} $k$-plane of
polynomials with Wronskian $w(t)$ is real (i.e., has a basis of real polynomials).

The Wronskian is a map from a Grassmannian to a projective space, both of dimension
$k(n{-}k)$. 
Eremenko and Gabrielov~\cite{EG02} considered the real Wronski map that sends the real
Grassmannian to real projective space, computing its topological degree (actually the
degree of a lift to oriented double covers).
This topological degree is strictly positive when $n$ is odd,
so that for $n$ odd, there are always real $k$-planes of polynomials with given real
Wronskian, proving a weak version of the Shapiro Conjecture. 

The full Shapiro Conjecture went far beyond the reality of the Wronski map.
It concerned, more generally, intersections of Schubert varieties given by flags osculating a rational
normal curve (osculating instances of Schubert problems), positing that if the
osculating points were all real, then all of the points of intersection were also real.
When the Schubert varieties are all hypersurfaces, the conjecture
asserted that the fibers of the Wronski map over polynomials with all roots real contained
only real subspaces.
Initially considered too strong to be true, the Shapiro Conjecture came to be accepted due
to significant computer experimentation~\cite{Sot00,Ve00} and partial
results~\cite{EG02b,So99}.

Fibers of the Wronski map over a polynomial $w(t)$ with distinct roots are intersections of
hypersurface Schubert varieties given by flags osculating the rational normal curve
at the roots of $w(t)$.
When $w(t)$ is real, its roots form a real variety (stable under complex
conjugation) and the corresponding intersection of Schubert varieties is also real.
Eremenko and Gabrielov's topological degree is a lower bound for the number of real
points in that real intersection.
In related work, Azar and Gabrielov~\cite{AG} proved a lower bound for the number of
real rational functions  of degree $d$ with $2d{-}3$ real critical points and two real
points where the function values coincide.  
(This is a lower bound for a family of Schubert problems on a flag manifold given by
osculating flags and was motivated by data from the
experiment~\cite{RSSS}.) 
These results suggested the possibility of lower bounds for the number of real points 
in an intersection of Schubert varieties given by flags osculating the rational normal
curve, when the intersection is a real variety.

A preliminary investigation~\cite{Orig_Lower} confirmed this possibility and uncovered
other structures in the numbers of real solutions, including upper bounds, lower
bounds, gaps, and a congruence modulo four, in different families of Schubert problems.
Those data led to two papers~\cite{Pu13,HSZ13} which proved some of
the observed structure. 

We describe the design, execution, and some results of a large-scale computer
experiment~\cite{Lower_Exp} to study such real osculating instances of Schubert
problems. 
This study investigated over 344 million instances of 756 Schubert problems, and it used
over 549 gigahertz-years of computing.
The topological lower bounds of Eremenko and Gabrielov~\cite{EG02} apply to variants of
the Wronski map and were extended by Soprunova and Sottile~\cite{SS06} to give
topological lower bounds to osculating Schubert problems where at most two Schubert
varieties were not hypersurfaces.
We studied 273 such osculating Schubert problems and observed that these topological lower
bounds were sharp for all except six of them.

Four of these six continue to defy explanation.
For the remaining two, the lack of sharpness is due to a congruence
modulo four observed in both~\cite{Orig_Lower} and~\cite{Lower_Exp} for certain
symmetric Schubert problems.
This congruence has since been established by Hein and
Sottile in collaboration with Zelenko.
They first~\cite{HSZ13} treated osculating Schubert problems and
established a weak form of the congruence.
Later, they showed that many symmetric
Schubert problems in a Grassmannian given by isotropic flags (symplectic or
orthogonal) have a congruence modulo four on their number of real
solutions~\cite{HSZ_new}.

For example, Table~\ref{T:intro} summarizes the computation
for two Schubert problems in $\Gr(4,8)$, each with twelve solutions.
%
%
\begin{table}[htb]
 \caption{Frequency of observed number of real solutions}
 \label{T:intro}

 \begin{tabular}{|l||r|r|r|r|r|r|r||r|}\hline
  \multirow{2}{*}{\textbf{Problem}}&
  \multicolumn{7}{c||}{\textbf{Number of Real Solutions}}&
  \multirow{2}{*}{\textbf{Total}}\\\cline{2-8}
   &$\boldsymbol{0}$&$\boldsymbol{2}$&$\boldsymbol{4}$&$\boldsymbol{6}$&
    $\boldsymbol{8}$&$\boldsymbol{10}$&$\boldsymbol{12}$&\\\hline\hline
  $\TTs^3\cdot\TIs\cdot\Is$\rule{0pt}{13pt}&
       81912&&88738&&7086&&222264&\textbf{400000}\\\hline
  $\ThIIs\cdot\TTs^2\cdot\Is^3$\rule{0pt}{13pt}&
       214375&&231018&&61600&&293007&\textbf{800000}\\\hline
 \end{tabular}
\end{table}
Together these used $202$ gigahertz-days of computing.
The columns are the number of observed instances with a given number of real solutions.
We only list even numbers, for the number of real solutions is congruent modulo two to the
number of complex solutions.
Empty cells indicate that no instances were observed with that number of real
solutions. 
Notice that the partitions encoding the Schubert problems are symmetric, and that the observed numbers of
real solutions satisfy an additional congruence modulo four.
This congruence occurs for a symmetric Schubert problem in $\Gr(k,2k)$ when the
sum of the lengths of the diagonals of its partitions is at least $k+4$.
This sum is eight for both problems of Table~\ref{T:intro}.

Five problems studied exhibited lower bounds and other structures 
in their observed numbers  of real solutions.  These problems form members of a 
family of Schubert problems, one in each Grassmannian, which we prove 
have this unusual structure on numbers of real solutions.  
We shall show that real solutions to an osculating instance correspond to
real factorizations of an associated polynomial, explaining the observed structures.

This paper is organized as follows.
In Section~\ref{S:definitions} we provide background on the Schubert calculus, the history
of the Shapiro Conjecture, and the work of Eremenko and Gabrielov on topological lower
bounds. 
We then describe the setup, execution, and some of the observations resulting from the
experimental project in Section~\ref{S:experiment}.
In Section~\ref{S:factorization}, we explain the lower bounds and gaps coming from a
family of Schubert problems whose determination we reduce to factoring certain
polynomials.
In Section~\ref{S:further}, we conclude with a discussion of some frequency tables 
exhibiting interesting structure.

%
\section{Background}\label{S:definitions}

We first establish our notation and definitions regarding the osculating Schubert
calculus,  give some additional history of the Shapiro Conjecture, and finally discuss
the topological lower bounds of Eremenko-Gabrielov and Soprunova-Sottile.

%
\subsection{Osculating Schubert calculus}
Let $k<n$ be positive integers.
The Grassmannian \defcolor{$\Gr(k,n)$} (or \defcolor{$\Gr(k,\C^n)$}) is the set of all
$k$-dimensional linear subspaces ($k$-planes) of $\C^n$, which is a complex manifold of
dimension $k(n{-}k)$.
Complex conjugation on $\C^n$ induces a conjugation on $\Gr(k,n)$.
The points of $\Gr(k,n)$ fixed by conjugation are its real points, and they form the
Grassmannian $\Gr(k,\R^n)$ of $k$-planes in $\R^n$. 

The Grassmannian has distinguished Schubert varieties, which are given by the discrete
data of a partition and the continuous data of a flag.
A \demph{partition} is a weakly decreasing sequence 
$\defcolor{\lambda}\colon n{-}k\geq\lambda_1\geq\dotsb\geq\lambda_k\geq 0$ of integers and
a  \demph{flag} is a filtration of $\C^n$:
\[
   \defcolor{\Fdot}\ \colon\  F_1\subsetneq F_2\subsetneq \dotsb \subsetneq F_n=\C^n\,,
\]
where $\dim F_i=i$.
The flag $\Fdot$ is real if $\overline{F_i}=F_i$ for all $i$, so that it is the
complexification of a flag in $\R^n$.
Given a partition $\lambda$ and a flag $\Fdot$, the associated Schubert variety is
 \begin{equation}\label{Eq:SchubVar}
   \defcolor{X_\lambda\Fdot}\ :=\ 
   \{ H\in\Gr(k,n) \,\mid\, \dim H\cap F_{n-k+i-\lambda_i}\geq i
     \ \mbox{ for }i=1,\dotsc,k\}\,.
 \end{equation}
This is an irreducible subvariety of the Grassmannian of codimension
$\defcolor{|\lambda|}:=\lambda_1+\dotsb+\lambda_k$.
From the definition, we see that $\overline{X_\lambda\Fdot}=X_\lambda\overline{\Fdot}$. 

A list $\defcolor{\blambda}=(\lambda^1,\dotsc,\lambda^m)$ of partitions which
satisfies the numerical condition
 \begin{equation}\label{Eq:SchubertProblem}
   |\lambda^1|+|\lambda^2|+\dotsb+|\lambda^m|\ =\ k(n{-}k)
 \end{equation}
is a \demph{Schubert problem}.
Given a Schubert problem $\blambda$ and general flags $\Fdot^1,\dotsc,$ $\Fdot^m$, 
Kleiman's Transversality Theorem~\cite{Kl74} implies that the intersection 
 \begin{equation}\label{Eq:instance}
   X_{\lambda^1}\Fdot^1 \cap
   X_{\lambda^2}\Fdot^2 \cap \dotsb  \cap
   X_{\lambda^m}\Fdot^m
 \end{equation}
is generically transverse.
The numerical condition~\eqref{Eq:SchubertProblem} implies that it is zero-dimensional (or
empty) and therefore consists of finitely many points.
The number of points does not depend upon the choice of general
flags and may be computed using algorithms from the Schubert calculus~\cite{Fu97}.
The intersection~\eqref{Eq:instance} is an \demph{instance} of the Schubert
problem $\blambda$ and its points are the \demph{solutions} to this
instance. 

We will not be concerned with general instances of Schubert problems but rather with
instances given by flags that osculate a common rational normal curve.
Let $\defcolor{\gamma}\colon\C\to\C^n$ be the following parameterized rational normal curve
 \begin{equation}\label{Eq:RNC}
    \gamma(t)\ :=\ \bigl(1\,,\,t\,,\,\tfrac{t^2}{2}\,,\,\tfrac{t^3}{3!}
                      \,,\,\dotsc\,,\,\tfrac{t^{n-1}}{(n-1)!}\bigr)\,.
 \end{equation}
(This choice of $\gamma$ is no restriction as all rational normal curves are projectively
equivalent.)
For each $t\in\C$, the \demph{osculating flag $\Fdot(t)$} has as its $i$-dimensional
subspace the $i$-plane $F_i(t)$ osculating the curve $\gamma$ at $\gamma(t)$:
 \begin{eqnarray}
    \defcolor{F_i(t)}& :=&  \Span\{\gamma(t)\,,\,\gamma'(t)
               \,,\,\dotsc\,,\,\gamma^{(i-1)}(t)\}\nonumber\\
    &=&\rowspace\left(\frac{t^{b-a}}{(b{-}a)!} \label{Eq:matrixF}
         \right)_{\substack{a=1,\dotsc,i\\b=a,\dotsc,n}}\ .
 \end{eqnarray}
(The remaining entries in this matrix are zero.)

An \demph{osculating instance} of a Schubert problem $\blambda$ is one given by osculating
flags, 
 \begin{equation}\label{Eq:osculatingInstance}
   X_{\lambda^1}\Fdot(t_1)\,\cap\,
   X_{\lambda^2}\Fdot(t_2)\,\cap\;\dotsb\;\cap\,
   X_{\lambda^m}\Fdot(t_m)\,.
 \end{equation}
Here, $t_1,\dotsc,t_m$ are distinct points of $\P^1$.
Osculating flags are not general for intersections of Schubert varieties as demonstrated
in~\cite[\S\ 2.3.6]{RSSS}, so Kleiman's Theorem does not imply that the
intersection~\eqref{Eq:osculatingInstance} is transverse.  
However, Eisenbud and Harris~\cite{EH83} noted that if $H\in X_\lambda\Fdot(t_0)$ then
its Wronskian $\defcolor{w_H(t)}$ vanishes to order $|\lambda|$ at $t=t_0$.
As the Wronskian (a form on $\P^1$) has degree $k(n{-}k)$, they deduced 
that~\eqref{Eq:osculatingInstance} is at most zero-dimensional.
Later, Mukhin, Tarasov, and Varchenko~\cite{MTV09b} showed that the intersection is
transverse when $t_1,\dotsc,t_m$ are real (and therefore also when they are general).

Eisenbud and Harris also noted that if $t_0$ is a root of order
$\ell$ of the Wronskian $w_H(t)$ of $H$, then there is a unique partition $\lambda$ with
$|\lambda|=\ell$ such that $H\in X_\lambda\Fdot(t_0)$.
This implies the following partial converse to Schubert
problems. 

\begin{proposition}\label{P:uniqueness}
  For each $H\in\Gr(k,n)$, there is a unique Schubert problem $\blambda$
  and unique points $t_1,\dotsc,t_m\in\P^1$ for which $H$ lies in the
  intersection~\eqref{Eq:osculatingInstance}. 
\end{proposition}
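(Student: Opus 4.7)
The plan is to read off both the Schubert problem and the points from the factorization of the Wronskian $w_H(t)$. Regarded as a form on $\P^1$, $w_H$ has degree $k(n{-}k)$ and therefore $k(n{-}k)$ roots counted with multiplicity. Writing its distinct roots as $t_1,\dotsc,t_m\in\P^1$ with respective multiplicities $\ell_1,\dotsc,\ell_m$, we have $\ell_1+\dotsb+\ell_m=k(n{-}k)$. The Eisenbud--Harris statement recalled immediately before the proposition is the whole engine: at each $t_i$ there is a unique partition $\lambda^i$ with $|\lambda^i|=\ell_i$ for which $H\in X_{\lambda^i}\Fdot(t_i)$.

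For existence I would set $\blambda:=(\lambda^1,\dotsc,\lambda^m)$. The identity
\[
   |\lambda^1|+\dotsb+|\lambda^m|\ =\ \ell_1+\dotsb+\ell_m\ =\ k(n{-}k)
\]
is exactly the numerical condition~\eqref{Eq:SchubertProblem}, so $\blambda$ is a Schubert problem, and by construction $H$ lies in the osculating instance~\eqref{Eq:osculatingInstance} at the points $t_1,\dotsc,t_m$.

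For uniqueness I would adopt the natural convention that every partition in a Schubert problem is nonempty (otherwise $(0,\dotsc,0)$ could be appended at any point of $\P^1$ without changing the intersection). Suppose $H$ also lies in the osculating instance determined by a Schubert problem $(\mu^1,\dotsc,\mu^p)$ at distinct points $s_1,\dotsc,s_p$, with each $|\mu^j|\geq 1$. Then the forward direction of Eisenbud--Harris--namely that $H\in X_\mu\Fdot(s)$ forces $w_H$ to vanish to order at least $|\mu|$ at $s$--gives
\[
   \sum_{j=1}^{p}\mathrm{ord}_{s_j}w_H\ \geq\ \sum_{j=1}^{p}|\mu^j|\ =\ k(n{-}k)\ =\ \deg w_H.
\]
Hence equality must hold at each $s_j$, so $\{s_1,\dotsc,s_p\}$ coincides with the set of roots $\{t_1,\dotsc,t_m\}$ and the multiplicities match. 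The uniqueness clause of the Eisenbud--Harris converse then forces $\mu^j=\lambda^i$ whenever $s_j=t_i$, completing the argument.

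There is no real obstacle here once the Eisenbud--Harris theorem is in hand; the one subtlety to keep explicit is the convention restricting to nonempty partitions, since this is precisely what prevents spurious data supported at points where $w_H$ does not vanish.
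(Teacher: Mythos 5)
Correct, and this is essentially the same argument: the paper treats the proposition as an immediate consequence of the Eisenbud--Harris facts it quotes just above, and you have spelled out precisely those details, correctly reading ``vanishes to order $|\lambda|$'' as ``at least $|\lambda|$'' in the forward direction so the degree count forces equality. Your remark about the nonempty-partition convention is a small but worthwhile precision that the paper leaves implicit.
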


To simplify notation, we henceforth write \defcolor{$X_\lambda(t)$} for the Schubert
variety $X_\lambda\Fdot(t)$.

As $\overline{\gamma^{(i)}(t)}=\gamma^{(i)}(\overline{t})$, we have 
$\overline{\Fdot(t)}=\Fdot(\overline{t})$, and therefore 
$\overline{X_\lambda\Fdot(t)}=X_\lambda(\overline{t})$.
A consequence of these observations and Proposition~\ref{P:uniqueness} is the following
corollary.

\begin{corollary}\label{cor:realOscInst}
 Let $\blambda=(\lambda^1,\dotsc,\lambda^m)$ be a Schubert problem and
 $t_1,\dotsc,t_m\in\P^1$ be distinct. 
 The instance
 \[ 
    X_{\lambda^1}(t_1)\,\cap\,
    X_{\lambda^2}(t_2)\,\cap\;\dotsb\; \cap\,
    X_{\lambda^m}(t_m)
 \]
 of the Schubert problem $\blambda$ is a real variety if and only if for each 
 $i=1,\dotsc,m$ there exists $1\leq j\leq m$ such that 
 $\lambda_i=\lambda_j$ and $t_i=\overline{t_j}$.
\end{corollary}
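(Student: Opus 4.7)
The plan is to use Proposition~\ref{P:uniqueness} to convert the geometric question of whether the intersection is conjugation-invariant into a combinatorial question about whether the data $\{(\lambda^i,t_i)\}$ is conjugation-invariant. Let $V$ denote the osculating instance. Since conjugation distributes over intersection and, as noted just before the corollary, $\overline{X_\lambda\Fdot(t)}=X_\lambda(\overline{t})$, we have
\[
\overline{V}\;=\;X_{\lambda^1}(\overline{t_1})\cap X_{\lambda^2}(\overline{t_2})\cap\dotsb\cap X_{\lambda^m}(\overline{t_m})\,,
\]
and $V$ is a real variety precisely when $V=\overline{V}$.

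For the ``if'' direction, the hypothesized pairing supplies a permutation $\sigma$ of $\{1,\dotsc,m\}$ with $\lambda^{\sigma(i)}=\lambda^i$ and $\overline{t_{\sigma(i)}}=t_i$; reindexing the displayed intersection by $\sigma$ identifies $\overline{V}$ with $V$ factor by factor. This direction uses only the identity $\overline{X_\lambda\Fdot(t)}=X_\lambda(\overline{t})$ and no uniqueness input.

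For the ``only if'' direction, I assume $V=\overline V$ and pick any $H\in V$, which exists since the numerical condition $|\lambda^1|+\dotsb+|\lambda^m|=k(n-k)$ together with the positivity of Schubert intersection numbers ensures that $V$ is nonempty. Then $\overline H\in\overline V=V$. Applying Proposition~\ref{P:uniqueness} to $\overline H$ produces a \emph{unique} Schubert problem, together with distinct points of $\P^1$, through which $\overline H$ arises as an osculating intersection point. Because $\overline H\in V$, that unique data is $(\blambda;t_1,\dotsc,t_m)$. On the other hand, conjugating each relation $H\in X_{\lambda^i}(t_i)$ shows $\overline H\in X_{\lambda^i}(\overline{t_i})$ for every $i$, so $(\blambda;\overline{t_1},\dotsc,\overline{t_m})$ is another such datum (with distinct points, since the $t_i$ are distinct). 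Uniqueness then forces the equality of multisets $\{(\lambda^i,t_i)\}_{i=1}^m=\{(\lambda^i,\overline{t_i})\}_{i=1}^m$, which unpacks exactly to the pairing claimed by the corollary.

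The one point deserving care is the correct interpretation of ``uniqueness'' in Proposition~\ref{P:uniqueness}: the Schubert problem is determined as an unordered collection of (partition, point) pairs whose points are distinct, and the corollary's index-based pairing is simply the translation of that multiset equality back into the language of ordered lists. Apart from this, the argument is a direct consequence of Proposition~\ref{P:uniqueness} combined with the conjugation identity $\overline{X_\lambda\Fdot(t)}=X_\lambda(\overline{t})$.
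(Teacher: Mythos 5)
Your argument is exactly the route the paper gestures at: the paper offers no written proof, instead stating that the corollary is ``a consequence'' of the identity $\overline{X_\lambda\Fdot(t)}=X_\lambda(\overline{t})$ together with Proposition~\ref{P:uniqueness}, and your write-up fills this in correctly in both directions (the ``if'' direction from the conjugation identity alone, the ``only if'' direction by applying uniqueness to a conjugate point).

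One small caveat worth tightening: your parenthetical justification for nonemptiness of $V$ is not quite right as stated. The numerical condition~\eqref{Eq:SchubertProblem} by itself does not force the Littlewood--Richardson number (the number of complex solutions) to be positive --- the paper's definition of ``Schubert problem'' is purely a dimension count --- and even a positive LR number does not make $V$ nonempty for free, since osculating flags are not general and Kleiman's theorem does not apply. What one actually needs is that an osculating instance always has intersection degree equal to the LR coefficient (a deformation/semicontinuity argument from the transverse case of~\cite{MTV09b}, riding on Eisenbud--Harris's observation that the intersection is at most zero-dimensional), together with the tacit standing assumption that the Schubert problem has at least one complex solution. For a Schubert problem with LR coefficient zero, every osculating instance is empty and hence trivially a real variety, so the ``only if'' direction is vacuous there --- this is the implicit nondegeneracy hypothesis you should surface rather than wave at ``positivity of Schubert intersection numbers,'' which is not a theorem.
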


Corollary~\ref{cor:realOscInst} asserts that the obviously sufficient condition for an
osculating instance of a Schubert problem to be a real variety, namely that each 
complex conjugate pair of osculation points have the same Schubert condition, is in fact
necessary. 

\subsection{The Shapiro Conjecture and its generalizations}

One motivation for studying real osculating instances of Schubert problems is
the conjecture of Shapiro and Shapiro, which was given two different proofs by Mukhin,
Tarasov, and Varchenko.

\begin{theorem}[\cite{MTV09,MTV09b}]
  Given any osculating instance of a Schubert problem
\[
   X_{\lambda^1}\Fdot(t_1)\,\cap\;
   X_{\lambda^2}\Fdot(t_2)\,\cap\;\dotsb\;\cap\,
   X_{\lambda^m}\Fdot(t_m)\,,
\]
 in which $t_1,\dotsc,t_m\in\R\P^1$, the intersection is transverse with all
 of its points real.
\end{theorem}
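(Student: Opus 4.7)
The plan is to translate the Schubert intersection into a spectral problem for a commuting family of self-adjoint operators, following the Gaudin/Bethe-ansatz approach pioneered by Mukhin, Tarasov and Varchenko. I would start by reinterpreting the intersection in terms of polynomial subspaces: the Eisenbud--Harris observation already invoked above shows that a point $H$ of $X_\lambda\Fdot(t)$ corresponds to a $k$-dimensional subspace of $\C[x]_{<n}$ whose Wronskian has a root of order $|\lambda|$ at $t$ with a ramification pattern recording $\lambda$. Thus the points of the osculating intersection with all $t_i\in\R\P^1$ correspond bijectively to $k$-planes of polynomials with prescribed real ramification at real points, and the task becomes to show that each such plane is defined over $\R$ and that the corresponding scheme is reduced.

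Next I would set up the $\mathfrak{gl}_n$ Gaudin model at the sites $t_1,\dotsc,t_m$, acting on the tensor product $V_{\lambda^1}\otimes\dotsb\otimes V_{\lambda^m}$ of irreducible polynomial representations, and restrict to the subspace of singular vectors of the weight dictated by~\eqref{Eq:SchubertProblem}. The Gaudin Hamiltonians form a commuting family of operators depending on the $t_i$, and the Bethe ansatz produces, for each solution of the Bethe equations, an eigenvector whose auxiliary Wronskian assembles---via a discrete Miura transformation---into exactly one of the polynomial subspaces from the first step. I would then invoke completeness of the Bethe ansatz in this setting to make this assignment a bijection that respects multiplicities, identifying the scheme-theoretic intersection with the joint spectrum of the Gaudin Hamiltonians on the singular weight subspace.

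Reality enters through a Hermitian form. I would endow the space of singular vectors with the tensor Shapovalov form, inherited from each factor and positive definite on the relevant weight subspace. A direct computation then shows that when $t_1,\dotsc,t_m$ are real, the Gaudin Hamiltonians are self-adjoint with respect to this form; hence their joint spectrum is real and they are simultaneously diagonalizable, and one can choose real common eigenvectors. Transversality corresponds to simplicity of the joint spectrum, which I would establish by deforming the $t_i$ along real paths and tracking eigenvalues, using that eigenvalues cannot collide on the real locus without violating self-adjointness plus positivity. Pulling back through the bijection of the previous step yields that the Schubert intersection is reduced and defined over $\R$.

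The principal obstacle is this final self-adjointness-plus-simplicity step. Positivity of the Shapovalov form on the singular-weight subspace and self-adjointness of the Gaudin Hamiltonians are nontrivial algebraic facts about $\mathfrak{gl}_n$-representations, and ruling out eigenvalue collisions for all---not just generic---real configurations of the $t_i$ requires a careful degeneration argument. The bookkeeping in the Bethe-ansatz correspondence, particularly the matching of multiplicities between Bethe eigenvectors and points of the Schubert scheme (where one must also handle on-shell Bethe vectors that apparently degenerate to zero), is the other significant technical challenge, and is where I would expect most of the work to lie.
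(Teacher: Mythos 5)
The paper does not prove this theorem; it records it as the Mukhin--Tarasov--Varchenko Theorem and cites \cite{MTV09,MTV09b}, so there is no internal proof to compare against. Your proposal does correctly identify the actual MTV strategy (Gaudin model at the points $t_i$, Bethe-ansatz identification of the singular weight space with the intersection scheme, reality from self-adjointness under the tensor Shapovalov form), so as a road map it is aimed at the right target.

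However, the step you flag as the principal obstacle is where your plan as stated would actually fail, and not merely for bookkeeping reasons. You propose to deduce transversality from simplicity of the joint spectrum, arguing that ``eigenvalues cannot collide on the real locus without violating self-adjointness plus positivity.'' That is false: commuting self-adjoint operators on a positive-definite Hermitian space can perfectly well have degenerate joint spectrum (take two commuting real symmetric matrices sharing a repeated eigenvalue), and nothing about Hermitian structure forbids collisions along a real path. The correct mechanism in \cite{MTV09b} is different in kind. One first identifies the coordinate ring of the scheme-theoretic osculating intersection with the \emph{Bethe algebra} $\mathcal{B}$ (the commutative algebra generated by all higher Gaudin Hamiltonians, or transfer-matrix coefficients) acting on the relevant singular weight subspace; this identification is an algebraic statement that does not by itself require reality. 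Then, for real $t_i$, one shows $\mathcal{B}$ is generated by operators that are self-adjoint for a positive-definite Hermitian form. A commutative algebra of self-adjoint operators on such a space has no nonzero nilpotents (a self-adjoint nilpotent is zero), hence $\mathcal{B}$ is semisimple, hence $\mathcal{B}\cong\C^N$ and the intersection scheme is reduced; reality of the points then comes from the real spectrum of the self-adjoint generators together with the fact that each point of the intersection is determined by the corresponding character of $\mathcal{B}$. So what is needed is semisimplicity of an algebra (absence of nilpotents), not simplicity of spectrum, and it is obtained algebraically rather than by a deformation/collision argument. Replacing your eigenvalue-tracking step with this ``self-adjoint $\Rightarrow$ semisimple $\Rightarrow$ reduced'' argument, and making precise the identification of the intersection's coordinate ring with the Bethe algebra, is exactly where the real substance of the MTV proof lies.
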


The Shapiro Conjecture was made by the brothers Boris and Michael Shapiro in 1993, and 
popularized through significant computer experimentation and partial
results~\cite{Sot00,Ve00}. 
An asymptotic version (where the points $t_i$ are sufficiently clustered and all except
two of the $\lambda^i$ consist of one part) was proven in~\cite{So99}.
The first breakthrough was given by Eremenko and Gabrielov~\cite{EG02b} who 
used complex analysis to prove it when $\min\{k,n{-}k\}=2$.
In this case it is equivalent to
the statement that a rational function whose critical points lie on a circle in $\P^1$
maps that circle to a circle.
Later, Mukhin, Tarasov, and Varchenko proved the full conjecture~\cite{MTV09,MTV09b} using
methods from mathematical physics.

While the Shapiro Conjecture may be formulated in any flag manifold, it is false in
general (except for the orthogonal Grassmannian~\cite{Purbhoo}).
Significant experimental work has uncovered the limits of its validity, as well as
generalizations and extensions that are likely true~\cite{secant,monotone,RSSS}, and has
led to a proof of one generalization (the Monotone Conjecture) in a special
case~\cite{EGS06}. 
For a complete account, see~\cite{FRSC} or~\cite[Chs.~9--14]{IHP}.

%
\subsection{Topological lower bounds}
While studying the Shapiro Conjecture, Eremenko and Gabrielov looked at real
osculating instances of the form
 \begin{equation}\label{Eq:EGlower}
  X_{\Is}(t_1)\,\cap\,
  X_{\Is}(t_2)\,\cap\;\dotsb\;\cap\,
  X_{\Is}(t_m)\,\cap\,
  X_{\lambda}(\infty)\,,
 \end{equation}
where $m+|\lambda|=k(n{-}k)$ and $\{t_1,\dotsc,t_m\}$ is a real set in that 
$\{t_1,\dotsc,t_m\}=\{\overline{t_1},\dotsc,\overline{t_m}\}\subset\P^1$, equivalently,
$w(t):=\prod_i(t-t_i)$ is a real polynomial.
The points in~\eqref{Eq:EGlower} are the fiber of the Wronski map
over the real polynomial $w(t)$ restricted to the Schubert variety
$X_\lambda(\infty)$.
Eremenko and Gabrielov~\cite{EG02} gave a formula for the topological degree of this
Wronski map restricted to the real points of $X_\lambda(\infty)$ (and lifted to an
oriented double cover).
This topological degree is a \demph{topological lower bound} on the number of real points
in the intersection~\eqref{Eq:EGlower}.  
This follows from the formula for the topological degree of a map $f\colon X\to Y$
between oriented manifolds,
\[
    \deg f\ =\ \sum_{x\in f^{-1}(y)} \mbox{sign}(df_x)\,,
\]
where $y\in Y$ is a regular value of $f$ and $\mbox{sign}(df_x)$ is $1$ if the orientation 
of $T_yY$ given by the differential $df_x(T_xX)$ agrees with its orientation from $Y$, and
$-1$ if the orientations do not agree.

This was generalized by Soprunova and Sottile~\cite[Th.~6.4]{SS06} to intersections of the
form
 \begin{equation}\label{Eq:SSlower}
  X_{\mu}(0)\,\cap\,
  X_{\Is}(t_1)\,\cap\,
  X_{\Is}(t_2)\,\cap\;\dotsb\;\cap\,
  X_{\Is}(t_m)\,\cap\,
  X_{\lambda}(\infty)\,,
 \end{equation}
where $m+|\lambda|{+}|\mu|=k(n{-}k)$ and $\{t_1,\dotsc,t_m\}\subset\C^*$ is a real set.
This intersection is again a fiber of the Wronski map restricted to  
$X_{\mu}(0)\cap X_{\lambda}(\infty)$ (and lifted to an oriented double cover).
They expressed the topological degree in terms of sign-imbalance.

A partition $\lambda$ is represented by its Young diagram, which is a left-justified array
of boxes with $\lambda_i$ boxes in row $i$.
When $\mu\subset\lambda$, we have the \demph{skew partition $\lambda/\mu$}, which is
the set-theoretic difference $\lambda\smallsetminus\mu$ of their diagrams.
For example, if
\[
   \lambda\ =\ \raisebox{-5pt}{\includegraphics{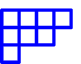}}
   \qquad\mbox{and}\qquad
   \mu\ =\ \raisebox{-2.5pt}{\includegraphics{pictures/21.eps}}
   \qquad\mbox{then}\qquad
   \lambda/\mu\ =\ \raisebox{-5pt}{\includegraphics{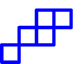}}\ .
\]

Given $\lambda$, let \defcolor{$\lambda^c$} be the partition 
$n{-}k{-}\lambda_k\geq\dotsb\geq n{-}k{-}\lambda_1$, the difference between the
$k\times(n{-}k)$ rectangle and $\lambda$.
For example, if $k=3$, $n=7$, and $\lambda=(3,0,0)$, then $\lambda^c=(4,4,1)$.
A \demph{Young tableau} of shape $\lambda/\mu$ is a filling of the boxes in $\lambda/\mu$
with the consecutive integers $1,2,\dotsc, |\lambda|{-}|\mu|$ which increases across each row
and down each column.
The \demph{standard filling} is the tableau whose numbers are in reading order.
Here are four tableaux of shape $(4,4,1)/(1)$.
The first has the standard filling.
\[
  \Skew{1}{2}{3}{4}{5}{6}{7}{8}
   \qquad
  \Skew{3}{5}{7}{1}{4}{6}{8}{2}
   \qquad
  \Skew{2}{3}{5}{1}{4}{6}{7}{8}
   \qquad
  \Skew{1}{3}{6}{2}{4}{5}{8}{7}
\]
Let \defcolor{$Y(\lambda/\mu)$} be the set of Young tableaux of shape $\lambda/\mu$.
Each tableau $T$ has a parity, $\defcolor{\mbox{sign}(T)}\in\{\pm1\}$, which is the sign
of the permutation mapping the standard filling to $T$.
The \demph{sign-imbalance} of $\lambda/\mu$ is 
\[
   \defcolor{\sigma(\lambda/\mu)}\ :=\ 
    \Bigl|\sum_{T\in Y(\lambda/\mu)} \mbox{sign}(T)\Bigr|\,.
\]

Algorithms in the Schubert calculus~\cite{Fu97} imply that the number of complex points in
the intersection~\eqref{Eq:SSlower} is the number $|Y(\lambda^c/\mu)|$ of tableaux of
shape $\lambda^c/\mu$.
Soprunova and Sottile show that the topological degree of the appropriate Wronski map is
the sign-imbalance of $\lambda^c/\mu$. 
We deduce the following proposition.

\begin{proposition}[\cite{EG02,SS06}]\label{P:topological_lower}
  If $\{t_1,\dotsc,t_m\}\subset \C^*$ is a real set, then the number of real points in the
  intersection~$\eqref{Eq:SSlower}$ is at least the sign-imbalance $\sigma(\lambda^c/\mu)$
  of $\lambda^c/\mu$.
\end{proposition}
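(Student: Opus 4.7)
The plan is to realize the intersection~\eqref{Eq:SSlower} as a fiber of an appropriate Wronski map between real manifolds of equal dimension and apply the topological-degree lower bound. Concretely, restrict the Wronski map to
\[
   W\ \colon\ X_\mu(0)\cap X_\lambda(\infty)\ \longrightarrow\ \P^{m}\,,
\]
where $H\mapsto w_H(t)$. By Eisenbud--Harris, $w_H$ vanishes to order $|\mu|$ at $0$ and to order $|\lambda|$ at $\infty$, so after dividing out $t^{|\mu|}$ and recording the leading coefficient appropriately, the image lies in a $\P^m$ with $m=k(n{-}k)-|\mu|-|\lambda|$. Both source and target are complex manifolds of complex dimension $m$, and both carry a real structure coming from complex conjugation; the intersection~\eqref{Eq:SSlower} is exactly the fiber $W^{-1}(w)$, where $w(t)=\prod_i(t-t_i)$ is the real polynomial encoded by the real set $\{t_1,\dots,t_m\}\subset\C^*$. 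By Mukhin--Tarasov--Varchenko (or, here, the transversality needed at a single regular value), $W$ is a finite map; Schubert calculus then identifies the generic fiber size as $|Y(\lambda^c/\mu)|$.

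Next, I would pass to oriented double covers of the real loci of source and target to obtain a continuous map $\widetilde W$ of oriented closed manifolds of real dimension $m$, as was done by Eremenko--Gabrielov. The topological degree of $\widetilde W$ is then well-defined, and at any real regular value $y$ of $W$ it satisfies
\[
   \deg \widetilde W\ =\ \sum_{x\in W^{-1}(y)\cap\R} \mathrm{sign}(d\widetilde W_x)\,,
\]
since nonreal preimages come in complex conjugate pairs that either do not lift to the double cover or contribute in canceling pairs. In particular,
\[
   \bigl|W^{-1}(y)\cap\R\bigr|\ \geq\ |\deg \widetilde W|\,.
\]
Applied to $y=w(t)$, this yields a lower bound on the number of real points of the intersection~\eqref{Eq:SSlower}, provided $w$ is a regular value; the general case follows from upper semicontinuity of the number of complex points and a density argument, exactly as in~\cite{EG02,SS06}.

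It then remains to identify $|\deg\widetilde W|$ with $\sigma(\lambda^c/\mu)$. I would do this by evaluating the degree at a particularly convenient real regular value, chosen so that the $|Y(\lambda^c/\mu)|$ preimages can be indexed bijectively by standard Young tableaux $T\in Y(\lambda^c/\mu)$, for instance via a limit in which the roots of $w$ cluster on the real axis, so that the Schubert intersection degenerates to a product of codimension-one intersections labeled by the cells in the tableau. For each such tableau $T$ the sign $\mathrm{sign}(d\widetilde W_x)$ is determined by comparing the natural orientation on $\P^m$ with the orientation inherited by the real Schubert cell, and the limit computation identifies this local sign with $\mathrm{sign}(T)$ (the parity of the permutation from the standard filling to $T$). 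Summing and taking absolute values gives
\[
   |\deg\widetilde W|\ =\ \Bigl|\sum_{T\in Y(\lambda^c/\mu)} \mathrm{sign}(T)\Bigr|\ =\ \sigma(\lambda^c/\mu)\,,
\]
completing the proof.

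The main obstacle is the last step: matching the analytic sign $\mathrm{sign}(d\widetilde W_x)$ at each real preimage with the combinatorial parity $\mathrm{sign}(T)$ of the indexing tableau. This requires building the bijection between preimages and tableaux by a carefully chosen degeneration, trivializing the cotangent bundle of $\P^m$ along that degeneration, and tracking how successive codimension-one Schubert conditions twist the orientation. This is precisely the sign calculation carried out in~\cite{SS06}, generalizing Eremenko--Gabrielov's computation of the Wronski degree.
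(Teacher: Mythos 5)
Your proposal is correct and follows essentially the same outline the paper gives: realize the intersection~\eqref{Eq:SSlower} as a fiber of the Wronski map restricted to $X_\mu(0)\cap X_\lambda(\infty)$, lift to oriented double covers as in Eremenko--Gabrielov, use the topological degree as a lower bound on the number of real preimages, and invoke Soprunova--Sottile's computation that this degree equals the sign-imbalance $\sigma(\lambda^c/\mu)$. The paper does not reprove the degree computation either (it states the proposition with citations~\cite{EG02,SS06} and a brief sketch), so your deferral of the tableau/sign identification to~\cite{SS06} matches the paper's own treatment.
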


When $\lambda=\mu=\emptyset$, Eremenko and Gabrielov gave a closed formula for this
topological lower bound, which showed that it is strictly positive when $n$ is odd and 
zero when $n$ is even~\cite{EG02}.
Later they showed that if both $n$ and $k$ are even, there is an
intersection~\eqref{Eq:EGlower} with no real points~\cite{EG03}, showing in these cases
that the topological lower bound is sharp.
Investigating when the topological lower bounds of Proposition~\ref{P:topological_lower}
are sharp and  when they are not was a focus of the experiment.

%
\section{Experimental project}\label{S:experiment}

We describe a large computational
experiment to study structure in the number of real solutions to real instances of
osculating Schubert problems.
The data for this experiment kept track of which pairs of the osculating flags were complex
conjugate, as preliminary computations and the Mukhin-Tarasov-Varchenko Theorem showed that
this affected the numbers of real solutions. 
The computations were carried out symbolically in exact arithmetic, with real osculating
instances of Schubert problems being formulated as systems of polynomial equations.
We sketch the execution of the experiment and then close with a discussion of some of the
data gathered, which is available to browse online~\cite{Lower_Exp}.

%
\subsection{Osculation type}
We have expressed Schubert problems $\blambda$ as lists of partitions.
Also useful and more compact is multiplicative notation.
For example, the Schubert problem $\blambda=(\TIs,\TIs,\I,\I,\I)$ in $\Gr(3,6)$ with
six solutions is written multiplicatively as $\TIs^2\cdot \I^3$ or as
$\TIs^2\cdot \I^3 = 6$, when we wish to give its number of solutions.

The topological lower bound for the Schubert problem $\TTs\cdot \I^5 = 6$ in $\Gr(3,6)$ is
the sign imbalance $\sigma(\TT^c)=\sigma(\ThIIs) = 2$. 
Thus, if the instance 
 \begin{equation}\label{Eq:refine}
  X_{\TTs}(0)\,\cap\,
  X_{\Is}(t_1)\,\cap\;\cdots\;\cap\,
  X_{\Is}(t_5)
 \end{equation}
of the Schubert problem $\TTs\cdot \I^5 = 6$ has 
$\{\overline{t_1},\dotsc,\overline{t_5}\}=\{t_1,\dotsc,t_5\}$, then~\eqref{Eq:refine}
contains at least two real points. 
If $t_1,\dotsc,t_5\in\R\P^1$ then all six points in~\eqref{Eq:refine} are real by the
Mukhin-Tarasov-Varchenko Theorem. 
This illustrates that the lower bound on the number of real solutions to an osculating
instance of a Schubert problem is sensitive to the number of real osculation points. 
Given a Schubert problem 
$(\lambda^1)^{a_1}\dotsb(\lambda^m)^{a_m}$ and a corresponding real osculating instance
$X$, the \demph{osculation type $r$} of $X$ is the list
$r=(r_{\lambda^1},\dotsc,r_{\lambda^m})$ where $r_{\lambda^i}$ is the number of Schubert 
varieties of the form $X_{\lambda^i}(t)$ containing $X$ with $t$ real. 
Since $X$ is real, we have $r_{\lambda^i}\equiv a_i \mod 2$ for each $i$.

Table~\ref{Ta:5.1e7=6} is from the experiment.
It records how often a given
number of real solutions was observed for a given osculation type in 400000 random real
instances of the Schubert problem $\V\cdot\I^7 = 6$.  
\begin{table}[htb]
 \caption{Frequency table for $\V\cdot\I^7 = 6$ in $\Gr(2,8)$}
  \label{Ta:5.1e7=6}

 \begin{tabular}{|c||r|r|r|r||r|}\hline
   \multirow{2}{*}{$r_{\Is}$}&
   \multicolumn{4}{c||}{\textbf{Number of Real Solutions}}&
   \multirow{2}{*}{\textbf{Total}}\\\cline{2-5}
   &$\boldsymbol{0}$&$\boldsymbol{2}$&$\boldsymbol{4}$&$\boldsymbol{6}$&\\\hline\hline
   \textbf{7}&&&&100000&\textbf{100000}\\\hline
   \textbf{5}&&&77134&22866&\textbf{100000}\\\hline
   \textbf{3}&&47138&47044&5818&\textbf{100000}\\\hline
   \textbf{1}&8964&67581&22105&1350&\textbf{100000}\\\hline 
 \end{tabular}
\end{table}
In every computed instance when $r_{\Is} = 7$, all six solutions were real,
agreeing with the Mukhin-Tarasov-Varchenko Theorem.
The nonzero entry $8964$ in the bottom row with $r_{\Is} = 1$ indicates the 
sharpness of the topological lower bound $\sigma(\V^c)=0$ for $\V\cdot \I^7$.  
The table suggests the lower bound of $r_{\Is}{-}1$ for the number of real solutions to an
instance of this Schubert problem, which we prove in Section~\ref{S:factorization}. 
Studying this Schubert problem used $1.8$ gigahertz-days of computing.

\subsection{Schubert problems in local coordinates}

The fundamental fact that underlies this experiment is that we may represent
Schubert problems on a computer through systems of equations, from which we may extract 
the number of real solutions.
We explain how to formulate Schubert problems as systems of equations.

A point in the Grassmannian $\Gr(k,n)$ is the row space of a matrix $M\in \Mat_{k\times n}$.
Thus the set of $k\times n$ matrices with complex entries parameterizes $\Gr(k,n)$ 
via the map 
 \begin{eqnarray*}
  \Mat_{k\times n} & \longrightarrow &\Gr(k,n)\,,\\
  M &\longmapsto& \rowspace (M)\,.
 \end{eqnarray*}
This restricts to an injective map from $\Mat_{k\times (n-k)}$ to a dense open set of
$\Gr(k,n)$, 
 \begin{eqnarray*}
  \Mat_{k\times (n-k)} & \longrightarrow& \Gr(k,n)\,,\\
  M &\longmapsto& \rowspace (\Id_{k\times k}:M)\,,
 \end{eqnarray*}
giving local coordinates for the Grassmannian. 

Schubert varieties also have local coordinates.
The flag $\Fdot(\infty)$ has $i$-dimensional subspace
\[
    F_i\ =\ \Span\{e_{n+1-i},\dotsc,e_{n-1},e_n\}\,,
\]
where $\{e_1,\dotsc,e_n\}$ are the standard basis vectors of $\C^n$.
The Schubert variety $X_\lambda(\infty)$ has local coordinates given given by
matrices $M$ whose entries satisfy 
\[
    M_{i,j}\ =\ \left\{
    \begin{array}{rcl}
     1 &\ &\mbox{if } j=i+\lambda_{k+1-i}\,,\\
     0 &\ &\mbox{if } j=a+\lambda_{k+1-a}\mbox{ for }a\neq i\,,\\
     0 &\ &\mbox{if } j<i+\lambda_{k+1-i}\,, 
    \end{array}\right.
\]
and whose other entries are arbitrary complex numbers.
For example, $X_{\TIs}(\infty)\subset \Gr(3,6)$ is parameterized by matrices of the form
\[
 \left(
  \begin{matrix}
   1 & M_{1,2} & 0 & M_{1,4} & 0 & M_{1,6} \\
   0 & {\I}    & 1 & M_{2,4} & 0 & M_{2,6} \\
   0 & {\I}    & 0 & {\I}    & 1 & M_{3,6}
  \end{matrix}
 \right)\,.
\]
Here, $\I$ denotes an entry which is zero. 

The flag $\Fdot(0)$ has $F_i=\Span\{e_1,\dotsc,e_i\}$ and local coordinates for
$X_\lambda(0)$ are given by reversing the rows of those for $X_\lambda(\infty)$.
More interesting is that 
$X_\lambda(\infty)\cap X_\mu(0)$ has local coordinates given by matrices $M$
 whose entries satisfy 
\[
    M_{i,j}\ =\ \left\{
    \begin{array}{rcl}
     1 &\ &\mbox{if } j=i+\lambda_{k+1-i}\,,\\
     0 &\ &\mbox{if } j<i+\lambda_{k+1-i}\,,\\
     0 &\ &\mbox{if } n{+}1-i-\mu_i<j\,,
    \end{array}\right.
\]
and whose other entries are arbitrary complex numbers.
For example, $X_{\IIs}(\infty)\cap X_{\TIgs}(0)\subset \Gr(3,6)$ has local coordinates given by
matrices of the form 
\[
 \left(
  \begin{matrix}
   1 & M_{1,2} & {\Ig} & 0       & {\Ig}   & 0       \\
   0 & {\I}    & 1     & M_{2,4} & {\Ig}   & 0       \\
   0 & {\I}    & 0     & 1       & M_{3,5} & M_{3,6}
  \end{matrix}
 \right)\,.
\]

Let us represent a flag $\Fdot$ by a matrix with rows $f_1,\dotsc,f_n$ so that
\[
   F_i\ =\ \rowspace\left(\begin{array}{c}f_1\\\vdots\\f_i\end{array}\right)\ ,
\]
(and also write $F_i$ for this matrix).
Then the condition~\eqref{Eq:SchubVar} that $H\in X_\nu\Fdot$ is expressed in any of these
local coordinates $M$ for $\Gr(k,n)$, $X_\lambda(\infty)$, or 
$X_\lambda(\infty)\cap X_\mu(0)$ by 
 \begin{equation}\label{Eq:rkEqs}
    \rank\left(\begin{array}{c}M\\F_{n-k+i-\nu_i}\end{array}\right)\ 
    \leq n-\nu_i\ \mbox{ for }i=1,\dotsc,k\,.
 \end{equation}
Each rank condition is given by the vanishing of all minors of the matrix of size
$n{-}\nu_i+1$, and therefore by a system of polynomials in the entries of $M$.

Given an osculating instance of a Schubert problem $\blambda$,
 \[ 
    X_{\lambda^1}(t_1)\,\cap\,
    X_{\lambda^2}(t_2)\,\cap\;\dotsb\; \cap\,
    X_{\lambda^m}(t_m)\,,
 \]
the rank equations~\eqref{Eq:rkEqs} formulate it as a system of polynomials in local
coordinates for the Grassmannian. 
If, say $t_m=\infty$, then we may formulate this instance in the smaller set of local
coordinates for $X_{\lambda^m}(\infty)$, and if we also have $t_{m-1}=0$, a further reduction
is possible using the coordinates for $X_{\lambda^{m-1}}(0)\cap X_{\lambda^m}(\infty)$.

We entertain these possibilities because solving Schubert problems using symbolic
computation is sensitive to the number of variables.
Whenever possible, computations in the experiment assume that $\infty$ and $0$
are osculation points. 
If two points of osculation are real, this is achieved by a simple change of variables.

A further sleight of hand is necessary for this to be
computationally feasible. 
Symbolic computation works best over the field $\Q$, and not as well over $\Q[\sqrt{-1}]$.
We choose our nonreal osculation points to lie in  $\Q[\sqrt{-1}]$, but
exploit that they come in pairs to formulate equations over $\Q$.
Indeed, if $t\not\in\R$, then the polynomials $I$ for $X_\lambda(t)$ described above will have
complex coefficients.
Taking real and imaginary parts of the polynomials in $I$ will give real polynomials that
define $X_\lambda(t)\cap X_\lambda(\overline{t})$.

\subsection{Methods}

This experiment formulates real osculating instances of Schubert problems,
determines their number of real solutions, records the result by osculation type, and
repeats this hundreds of millions of times on a supercomputer.
The overall framework and basic code was adapted from that developed for other
experimental projects our group has run to study generalizations of the Shapiro
Conjecture~\cite{secant,monotone}.
This experimental design and core code are due to Hillar and are explained in detail
in~\cite{Exp-FRSC}. 

The experiment was organized around a MySQL database hosted at Texas A\&M University.
The database keeps track of all aspects of the experiment, from the problems to be computed (and how
they are computed) to the current state of the computation to the data from the computation.
We wrote web-based tools to communicate with the database and display the data from the
experiment, allowing us to monitor the computations.
The computation was controlled by a perl script that, when run, gets a problem to work on
from the database, sets up and runs the computation, and upon conclusion, updates the
database with its results. 
The perl script may be run on any machine with access to the database, and we used
job-scheduling tools to control its running on the two clusters we have access to at Texas
A\&M. 
These are the brazos cluster in which our research
group controls 20 eight-core nodes, and the  Calclabs, which consists of over 200 Linux
workstations that moonlight as a Beowulf cluster---their day job being calculus instruction. 
In all, the experiment solved over 344 million real osculating instances of 756 Schubert
problems and used 549 gigahertz-years of computing.
%
%

A separate program was used to load problems into the database, which were first screened 
for possible interest and feasibility.
During loading, it was also determined which scheme of local coordinates to use for computing
that particular problem, similar to the protocol followed in~\cite{secant,monotone}.
The actual computation also followed those experiments, and more detail, including
references, is given in {\it loc.\ cit}.
Briefly, an instance was formulated as a system of polynomials in local coordinates; then 
Gr\"obner basis methods implemented in Singular~\cite{Singular} computed a univariate
eliminant 
whose number of real roots is equal to the number of real solutions.
These were counted using the symbolic method of Sturm (implemented in Singular's {\tt
  rootsur}~\cite{rootsur_lib} library).

The experiment was designed to be robust.
All calculations are repeatable as the data are deterministically generated from random
seeds which are stored in the database.
The inevitability of problems, from processor failures to power outages to erroneous human
intervention with the database, motivated us to build in recoverability from all such
events.

\subsection{Some results}\label{s:Frequency}

The results from computing each Schubert problem are recorded in frequency tables such as
Table~\ref{Ta:5.1e7=6} and may be browsed online at~\cite{Lower_Exp}.
The tables display the number of instances of a given osculation type with a given number
of real solutions. 
We display our data in this manner to clarify the dependence of the number of
real solutions upon  osculation type.
For many of the Schubert problems we studied, there is clearly some structure in the
possible number of real solutions in terms of osculation type, but this behavior is not
uniform across all Schubert problems.

For example, in a few problems there appeared to be an upper bound on the number of real
solutions that depends on osculation type, such as the
problem $\TIs^2\cdot \I^3 = 6$ in $\Gr(3,6)$ of Table~\ref{Ta:21e2.1e3=6}.
\begin{table}[htb]
 \caption{Frequency table for $\TIs^2 \cdot\I^3 = 6$ in $\Gr(3,6)$}
  \label{Ta:21e2.1e3=6}

 \begin{tabular}{|c|c||r|r|r|r||r|}\hline
   \multirow{2}{*}{$r_{\TIs}$}&
   \multirow{2}{*}{$r_{\Is}$}&
   \multicolumn{4}{c||}{\textbf{Number of Real Solutions}}&
   \multirow{2}{*}{\textbf{Total}}\\\cline{3-6}
   &&$\boldsymbol{0}$&$\boldsymbol{2}$&$\boldsymbol{4}$&$\boldsymbol{6}$&\\\hline\hline
   \textbf{2}&\textbf{3}&&&&100000&\textbf{100000}\\\hline
   \textbf{2}&\textbf{1}&27855&11739&22935&37471&\textbf{100000}\\\hline
   \textbf{0}&\textbf{3}&17424&82576&&&\textbf{100000}\\\hline
   \textbf{0}&\textbf{1}&&100000&&&\textbf{100000}\\\hline 
 \end{tabular}
\end{table}
This used $1.4$ gigahertz-days of computing.
For the vast majority of the Schubert problems there were instances of every osculation
type with all solutions real, and it was not clear what distinguished this second class of
Schubert problems from the first.
In Section~\ref{S:further} we present tables from several other Schubert problems and
discuss other structures we observe.

Of the 756 Schubert problems studied, 273 had the form~\eqref{Eq:SSlower}
and so had a topological lower bound given by Proposition~\ref{P:topological_lower}.
These included the Wronski maps for $\Gr(2,4)$, $\Gr(2,6)$, and $\Gr(2,8)$ for which
Eremenko and Gabrielov had shown that the lower bound of zero was sharp~\cite{EG03}.
For 264 of the remaining 270 cases, instances were computed showing that this topological
lower bound was sharp.  
There were however six Schubert problems for which the topological lower bounds were not
observed.
These were
\[
  \bigl(\,\raisebox{-2.5pt}{\ThTh},\raisebox{-5pt}{\III},\I^7\bigr)\,,\ 
  \Bigl(\,\raisebox{-5pt}{\TTT},\Th,\I^7\Bigr)\,,\ 
  \bigl(\,\raisebox{-2.5pt}{\ThTh},\raisebox{-2.5pt}{\TT},\I^6\bigr)\,,\ 
  \Bigl(\,\raisebox{-5pt}{\TTT},\raisebox{-2.5pt}{\TT},\I^6\Bigr)\,,\ 
  \Bigl(\,\raisebox{-5pt}{\ThThT},\I^8\Bigr)\,,
\]
all in $\Gr(4,8)$, and $\I^9$ in $\Gr(3,6)$.
These have observed lower bounds of $3,3,2,2,2,2$ and sign-imbalances of $1,1,0,0,0,0$,
respectively. 
There is not yet an explanation for the first four, but the last two are 
symmetric Schubert problems, which were observed to have a
congruence modulo four on their numbers of real solutions.
This congruence gives a lower bound of two for both problems $\ThThTs\cdot\I^8=90$ in
$\Gr(4,8)$ and $\I^9=42$ in $\Gr(3,6)$.

Table~\ref{Ta:1e9=42a} shows the result of computing a million real osculating instances
of the Schubert problem $\I^9 = 42$ in $\Gr(3,6)$, which used $1.07$ gigahertz-years of
computing. 
\begin{table}[htb]
 \caption{Frequency table for $\I^9 = 42$ in $\Gr(3,6)$}
  \label{Ta:1e9=42a}

 \begin{tabular}{|c||r|r|r|r|r|r|r|r|r|r|r|r}\hline
   \multirow{2}{*}{$r_{\Is}$}&
   \multicolumn{11}{c}{\textbf{Number of Real Solutions}}\\\cline{2-13}
   &$\boldsymbol{0}$&$\boldsymbol{2}$&$\boldsymbol{4}$&$\boldsymbol{6}$&$\boldsymbol{8}$&
   $\boldsymbol{10}$&$\boldsymbol{12}$&$\boldsymbol{14}$&$\boldsymbol{16}$&$\boldsymbol{18}$&
   $\boldsymbol{20}$&$\cdots$\\\hline\hline
   \textbf{9}&& && && && && &&$\cdots$\\\hline
   \textbf{7}&&1843&&13286&&69319&&18045&&13998&&$\cdots$\\\hline
   \textbf{5}&&30223&&51802&&57040&&17100&&12063&&$\cdots$\\\hline
   \textbf{3}&&34314&&93732&&47142&&10213&&5532&&$\cdots$\\\hline
   \textbf{1}&& &&151847&&35220&&6416&&2931&&$\cdots$\\\hline 
 \end{tabular}\bigskip \newline

 \begin{tabular}{r|r|r|r|r|r|r|r|r|r|r|r||r|}\hline
   \multicolumn{12}{c||}{\textbf{Number of Real Solutions}}&
   \multirow{2}{*}{\textbf{Total}}\\\cline{1-12}
   $\dotsb$&$\boldsymbol{22}$&$\boldsymbol{24}$&$\boldsymbol{26}$&
   $\boldsymbol{28}$&$\boldsymbol{30}$&$\boldsymbol{32}$&
   $\boldsymbol{34}$&$\boldsymbol{36}$&$\boldsymbol{38}$&
   $\boldsymbol{40}$&$\boldsymbol{42}$&\\\hline\hline
   $\dotsb$&&&&&&&&&&&200000&\textbf{200000}\\\hline
   $\dotsb$&22883&&4592&&11603&&3891&&473&&40067&\textbf{200000}\\\hline
   $\dotsb$&15220&&2767&&4634&&2056&&211&&6884&\textbf{200000}\\\hline
   $\dotsb$&5492&&839&&1194&&504&&65&&973&\textbf{200000}\\\hline
   $\dotsb$&2345&&362&&450&&181&&22&&226&\textbf{200000}\\\hline 
 \end{tabular}
\end{table}
As with Table~\ref{T:intro}, only numbers of real solutions congruent to 42 modulo four
were observed.

The observed congruence modulo four which were
inspired by these computations (and those of the earlier investigation~\cite{Orig_Lower}) were established in~\cite{HSZ13,HSZ_new}. 

A partition $\lambda$ is \demph{symmetric} if it equals its matrix-transpose.
For example, all except the last of the following are symmetric, 
\[
   \I\,,\ 
   \raisebox{-2.5pt}{\TI}\,,\ 
   \raisebox{-2.5pt}{\TT}\,,\ 
   \raisebox{-5pt}{\ThII}\,,\ 
   \raisebox{-5pt}{\ThThT}\,,\ 
   \raisebox{-5pt}{\ThThTh}\,,\ 
   \raisebox{-7.5pt}{\includegraphics{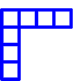}}\,,\ 
   \raisebox{-7.5pt}{\includegraphics{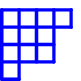}}\,,\ \ 
   \raisebox{-5pt}{\includegraphics{pictures/431.eps}}\,.
\]
A Schubert problem $\blambda$ in $\Gr(k,2k)$ is \demph{symmetric} if every partition in
$\blambda$ is symmetric.
For a symmetric partition $\lambda$, let \defcolor{$\ell(\lambda)$} be the number of boxes
in its main diagonal, which is the maximum number $i$ with $\lambda_i\geq i$.
We state the main result of~\cite{HSZ13,HSZ_new}.

\begin{proposition}\label{P:mod4}
 Suppose that $\blambda=(\lambda^1,\dotsc,\lambda^m)$ is a symmetric Schubert problem in
 $\Gr(k,2k)$ with $\sum_i \ell(\lambda^i)\geq k{+}4$.
 Then the number of real solutions to a real osculating instance of $\blambda$ is
 congruent to the number of complex solutions modulo four.
\end{proposition}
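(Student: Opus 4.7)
The plan is to exploit a symplectic or orthogonal involution on $\Gr(k,2k)$ that pairs solutions of symmetric osculating Schubert problems, and then upgrade the resulting mod-$2$ pairing to mod-$4$ via a topological argument. First, I would choose a non-degenerate bilinear form $\omega$ on $\C^{2k}$, defined over $\R$, for which the rational normal curve $\gamma$ of~\eqref{Eq:RNC} is self-polar: $\omega(\gamma^{(a)}(t),\gamma^{(b)}(t))=0$ whenever $a+b<2k-1$. Such an $\omega$ is unique up to scalar (it pairs $e_i$ with $\pm e_{2k+1-i}$, symplectic or orthogonal according to the parity of $k$), and it makes every osculating flag isotropic: $F_i(t)^\perp=F_{2k-i}(t)$. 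Let $\iota\colon\Gr(k,2k)\to\Gr(k,2k)$, $H\mapsto H^\perp$, be the induced involution, whose fixed locus is the isotropic Grassmannian $\mathrm{IG}(k)\subset\Gr(k,2k)$. Since $\lambda=\lambda^T$ for each $\lambda^i$, the isotropy of $\Fdot(t)$ and the definition~\eqref{Eq:SchubVar} give $\iota(X_{\lambda^i}\Fdot(t_i))=X_{\lambda^i}\Fdot(t_i)$, so $\iota$ acts on the solution set of the symmetric osculating instance and, because $\omega$ is real, commutes with complex conjugation.

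Next, I would use the hypothesis to eliminate $\iota$-fixed solutions. A fixed point is a point of $\mathrm{IG}(k)$ satisfying the isotropic analogue of~\eqref{Eq:SchubVar}, and by the classical codimension formula $X_\lambda\Fdot(t)\cap\mathrm{IG}(k)$ has codimension $(|\lambda|+\ell(\lambda))/2$ in $\mathrm{IG}(k)$. Summing over the problem and using $\sum_i|\lambda^i|=k^2$ together with $\dim\mathrm{IG}(k)=k(k+1)/2$, the expected dimension of the isotropic intersection is $(k-\sum_i\ell(\lambda^i))/2$, which is at most $-2$ under the hypothesis. Hence the isotropic Schubert problem is empty, $\iota$ has no fixed points on the solution set, and complex solutions partition into $\iota$-orbits $\{H,H^\perp\}$ of size $2$; in particular $\#\mathrm{complex}=2N$ and $\#\mathrm{real}=2R$, where $R$ counts the $\iota$-orbits consisting of two real points (note that if $H$ is real then so is $H^\perp$, since $\omega$ is real).

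The remaining and hardest step is the mod-$2$ statement $R\equiv N\pmod 2$, which multiplies by $2$ to give the claimed mod-$4$ congruence. The non-real solutions decompose into orbits of size $2$ of the form $\{H,\overline{H}=H^\perp\}$ (``twisted'' orbits, contributing $2$ each) and orbits of size $4$ under $\langle\iota,\text{conj}\rangle$ (contributing $4$ each), so the mod-$2$ claim is equivalent to the number of twisted orbits being even. To establish this, I would pass to the categorical quotient $\Gr(k,2k)/\iota$, realize $\iota$-orbits of solutions as fibers of a Wronski-like map on this quotient, and apply the oriented-degree machinery of Eremenko--Gabrielov and Soprunova--Sottile in the spirit of Proposition~\ref{P:topological_lower} to the quotient map, obtaining a mod-$2$ congruence between the count of real fibers and a topological invariant. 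The main obstacle is the orientation analysis on the quotient: one must build a consistent orientation compatible with both $\iota$ and complex conjugation, show that real $\iota$-orbits contribute with uniform sign, and verify that twisted non-real orbits do not spoil the count. The extra slack ``$+3$'' in $\sum_i\ell(\lambda^i)\geq k+4$ beyond the ``$+1$'' needed for the emptiness of the isotropic problem is precisely what makes the quotient map generically smooth on the relevant strata and forces the twisted orbits to cancel; this constitutes the essential technical content of~\cite{HSZ13,HSZ_new}.
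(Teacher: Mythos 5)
The paper does not actually prove Proposition~\ref{P:mod4}; it states it as the main theorem of~\cite{HSZ13,HSZ_new} and cites it without proof. The only argument the paper supplies toward the congruence is the remark after Corollary~\ref{C:factoring}, which handles the single family $(\minusIt,\I^{2k-1})$ by an elementary count of ordered factorizations $\phi=g\,h$: nonreal factorizations with $\overline{g}\ne h$ fall into $4$-element orbits $\{(g,h),(h,g),(\overline{g},\overline{h}),(\overline{h},\overline{g})\}$, while those with $\overline{g}=h$ occur only when $\phi$ has no real roots and number $2^{k-1}$, a multiple of four once $k>2$. So there is no internal proof of the general statement to compare your attempt against.

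That said, your proposal correctly reconstructs the geometric skeleton that the cited papers use: the bilinear form making $\gamma$ self-polar, the induced involution $\iota\colon H\mapsto H^\perp$ on $\Gr(k,2k)$, the invariance of $X_\lambda\Fdot(t)$ under $\iota$ for $\lambda=\lambda^T$ and $\Fdot(t)$ self-annihilating, and the codimension arithmetic $\dim - \sum_i(|\lambda^i|+\ell(\lambda^i))/2 = (k-\sum_i\ell(\lambda^i))/2$ showing the isotropic problem has negative expected dimension under the hypothesis. You also correctly reduce the congruence to the parity of the number of twisted orbits $\{H,\overline H\}$ with $H^\perp=\overline H$, via $\#\text{complex}-\#\text{real}=4a+2b$. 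Two genuine gaps remain. First, negative expected dimension does not by itself imply the isotropic osculating intersection is empty; you need a dimensional-transversality statement for osculating isotropic Schubert varieties (this is nontrivial and is part of what~\cite{HSZ13,HSZ_new} must establish). Second, and more seriously, the decisive step --- that $b$ is even --- is not proved; you gesture at an oriented-degree argument on the quotient $\Gr(k,2k)/\iota$ without constructing the orientation, the map whose fibers are being counted, or the sign analysis for twisted orbits. You candidly flag this as the hard part, but it means the proof is a setup plus a plan, not a proof. (Also, the emptiness threshold is $\sum_i\ell(\lambda^i)\ge k{+}2$, since $\sum_i\ell(\lambda^i)\equiv k\pmod 2$, so the hypothesis gives a slack of $+2$, not $+3$.)
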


One of the symmetric problems, $\ThThThs\cdot \I^7 = 20$ in $\Gr(4,8)$, not only exhibited
this congruence but also appeared to have lower bounds depending upon the osculation
type $r_{\Is}$ as well as  further gaps in its numbers of real solutions (we never observed
12 or 16 real solutions).
Table~\ref{Ta:333.1e7=20} displays the result of computing 400000 real osculating 
\begin{table}[htb]
 \caption{Gaps and lower bounds for $\ThThThs\cdot \I^7=20$ in $\Gr(4,8)$}
  \label{Ta:333.1e7=20}

 \begin{tabular}{|c||r|c|r|c|r|c|c|c|c|c|r||r|}\hline
   \multirow{2}{*}{$r_{\Is}$}&
   \multicolumn{11}{c||}{\textbf{Number of Real Solutions}}&
   \multirow{2}{*}{\textbf{Total}}\\\cline{2-12}
   &$\boldsymbol{0}$&$\boldsymbol{2}$&$\boldsymbol{4}$&$\boldsymbol{6}$
   &$\boldsymbol{8}$&$\boldsymbol{10}$&$\boldsymbol{12}$&$\boldsymbol{14}$
   &$\boldsymbol{16}$&$\boldsymbol{18}$&$\boldsymbol{20}$&\\\hline\hline
   \textbf{7}&&&&&&&&&&&100000&\textbf{100000}\\\hline
   \textbf{5}&&&&&85080&&&&&&14920&\textbf{100000}\\\hline
   \textbf{3}&&&66825&&30232&&&&&&2943&\textbf{100000}\\\hline
   \textbf{1}&37074&&47271&&14517&&&&&&1138&\textbf{100000}\\\hline 
 \end{tabular}
\end{table}
instances of this Schubert problem, which used $2.06$ gigahertz-days of computing.
This is a member of a family of Schubert problems
(the problem of Table~\ref{Ta:5.1e7=6} is another) that we can solve completely, and
whose numbers of real solutions have a lower bound depending on osculation type, as well
as gaps.
We explain this in the next section.

%
\section{Lower bounds via factorization}\label{S:factorization}

In our experimentation, we saw that Schubert problems related to 
$\V\cdot\I^7=6$ in $\Gr(2,8)$ (Table~\ref{Ta:5.1e7=6}) and $\ThThThs\cdot \I^7=20$ in
$\Gr(4,8)$ (Table~\ref{Ta:333.1e7=20}) appeared to have gaps and lower bounds depending on
$r_{\Is}$ in their numbers of real solutions.
These are members of a family of Schubert problems, one for each Grassmannian 
$\Gr(k,n)$ with $2\leq k, n{-}k$, which we are able to solve completely, thereby
determining all possibilities for the number of real solutions and explaining these gaps and
lower bounds. 

For $k,n$, let $\minusIt_{k,n}$ ($\minusIt$ for short) denote the partition
$((n{-}k{-}1)^{k-1},0)$ ($n{-}k{-}1$ repeated $k{-}1$ times), which is the
complement of a full hook, $(n{-}k,1^{k-1})$.
For example, 
\[
   \minusIt_{2,6}\ =\ \Th \,,\,\qquad
   \minusIt_{3,8}\ =\ \raisebox{-2.5pt}{\FF}\,,\,\qquad\mbox{and}\qquad
   \minusIt_{4,8}\ =\ \raisebox{-5pt}{\ThThTh}\,.
\]
The osculating Schubert problems in this family all have the form
$\blambda = (\minusIt,\I^{n-1})$ in $\Gr(k,n)$, and they all have topological lower bounds 
$\sigma(\minusIt_{k,n})$ coming from Proposition~\ref{P:topological_lower}.
The multinomial coefficient $\binom{n}{a,b}$ is zero unless $n=a{+}b$, and in that case it
equals $\frac{n!}{a!b!}$.

\begin{lemma}\label{L:couting_factorizations}
 The Schubert problem $\blambda = (\minusIt,\I^{n-1})$ in $\Gr(k,n)$ has 
 $\binom{n-2}{k-1}$ solutions and 
 $\sigma(\minusIt)= \binom{\lfloor\frac{n-2}{2}\rfloor}%
  {\lfloor\frac{k-1}{2}\rfloor,\lfloor\frac{n-k-1}{2}\rfloor}$, 
 which is zero unless $n$ is even and $k$ is odd.  
\end{lemma}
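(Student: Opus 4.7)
The plan is to address the two claims of the lemma separately, using combinatorics of standard Young tableaux of the complementary hook shape $\minusIt^c = (n{-}k,\,1^{k-1})$ in the $k \times (n{-}k)$ rectangle.

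For the number of solutions, I would invoke iterated Pieri: since $|\minusIt| + (n{-}1) = k(n{-}k) = \dim\Gr(k,n)$, the intersection number of $(\minusIt,\I^{n-1})$ equals the number of standard Young tableaux of the skew shape $(n{-}k)^k/\minusIt = \minusIt^c$. This hook has $n{-}1$ cells, with arm length $n{-}k{-}1$ and leg length $k{-}1$. Every SYT has $1$ in the corner, while the remaining entries $\{2,\dotsc,n{-}1\}$ must be distributed in increasing order between the arm and the leg. Such a tableau is uniquely determined by the subset $A\subset\{2,\dotsc,n{-}1\}$ of size $n{-}k{-}1$ filling the arm, giving exactly $\binom{n-2}{k-1}$ tableaux.

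For the sign-imbalance, which is the quantity $\sigma(\minusIt^c)$ appearing in the topological lower bound of Proposition~\ref{P:topological_lower}, I would compute the sign of each hook SYT via the shuffle permutation from the standard filling in reading order. With $B = \{2,\dotsc,n{-}1\}\setminus A$, this sign works out to $(-1)^{\mathrm{inv}(A,B)}$ where $\mathrm{inv}(A,B) := |\{(a,b)\in A\times B : a>b\}|$. Viewing each subset as a shuffle word and using the identity $\mathrm{inv}(A,B) + \mathrm{inv}(B,A) = (n{-}k{-}1)(k{-}1)$ to reconcile conventions, one obtains
\[
   \sigma(\minusIt^c)\ =\ \Bigl|\binom{n-2}{k-1}_{-1}\Bigr|\,,
\]
where $\binom{\cdot}{\cdot}_q$ denotes the Gaussian $q$-binomial coefficient.

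To evaluate this, I would apply the $q$-Lucas theorem at $q=-1$: writing $n{-}2 = 2a+r$ and $k{-}1 = 2b+s$ with $r,s\in\{0,1\}$, one has $\binom{n-2}{k-1}_{-1} = \binom{a}{b}\binom{r}{s}$, which vanishes precisely when $(r,s)=(0,1)$ and otherwise equals $\binom{\lfloor(n-2)/2\rfloor}{\lfloor(k-1)/2\rfloor}$. A short parity analysis shows that $\lfloor(k-1)/2\rfloor + \lfloor(n-k-1)/2\rfloor = \lfloor(n-2)/2\rfloor$ in exactly those parity cases where the $q$-binomial is nonzero, so the multinomial stated in the lemma agrees with this binomial in every case where nonzero and vanishes in the remaining case. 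The main obstacle is the sign-bookkeeping in the second step --- carefully tracking the reading-order permutation and matching its inversion count to the convention of the Gaussian $q$-binomial. Once this identification is in place, the $q$-Lucas theorem and the parity case analysis are routine.
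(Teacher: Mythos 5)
Your argument is correct and follows essentially the same route as the paper: both reduce the intersection number to counting standard Young tableaux of the full hook $\minusIt^c=(n{-}k,1^{k-1})$, identify the sign-imbalance with the Gaussian binomial $\binom{n-2}{k-1}_q$ evaluated at $q=-1$, and then evaluate. The only difference is at the final step, where the paper simply cites~\cite[Prop.~7.10]{SS06} for the $q=-1$ evaluation while you carry it out directly via the $q$-Lucas theorem and a parity check.
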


\begin{proof}
 The number of solutions of the Schubert problem $\blambda=(\minusIt,\I^{n-1})$ in
 $\Gr(k,n)$ is the number of Young tableaux of shape $\minusIt^c$, which is a
 full hook $(n{-}k,1^{k-1})$ consisting of one row of length $n{-}k$ and one column of
 length $k$.
 Here are full hooks for $(k,n)$ equal to $(2,6)$, $(3,8)$, $(4,8)$, and $(4,10)$.
\[
  \includegraphics{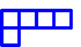}\qquad
  \raisebox{-2.5pt}{\includegraphics{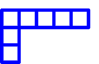}}\qquad
  \raisebox{-5pt}{\includegraphics{pictures/hook4.8.eps}}\qquad
  \raisebox{-5pt}{\includegraphics{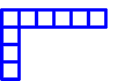}}
\]
 A tableau with such a hook shape has a 1 in its upper left box and the numbers
 $2,\dotsc,n{-}1$ filling out its first row and first column.
 This filling is determined by the $k{-}1$ numbers in the rest of its first
 column.
 Thus there are $\binom{n-2}{k-1}$ tableaux of hook shape  $(n{-}k,1^{k-1})$. 

 Reading a tableau $T$ with hook shape gives a word of the form $1AB$, where $1A$ is the
 first row and $1B$ is the first column.
 This is the permutation corresponding to $T$ whose sign contributes to the
 sign-imbalance. 
 The subwords $AB$ are shuffles of the numbers $\{2,\dotsc,n{-}1\}$.
 Counting these permutations by their lengths is the evaluation of the Gaussian polynomial 
 $\binom{n{-}2}{k{-}1}_q$ at $q=-1$.
 Thus the sign imbalance is $\binom{n{-}2}{k{-}1}_{-1}$, which is well-known
 (see e.g.~\cite[Prop.~7.10]{SS06}) to be 
 $\binom{\lfloor\frac{n-2}{2}\rfloor}{\lfloor\frac{k-1}{2}\rfloor,\lfloor\frac{n-k-1}{2}\rfloor}$.
\end{proof}

\begin{theorem}\label{Th:factorization}
 For any $k,n$, the solutions to the osculating instance of the Schubert problem
 $(\minusIt,\I^{n-1})$ in $\Gr(k,n)$,
 \begin{equation}\label{Eq:osc_inst}
   X_{\Is}(t_1)\,\cap\,
   X_{\Is}(t_2)\,\cap\;\dotsb\;\cap\,
   X_{\Is}(t_{n-1})\,\cap\,
   X_{\minusIs}(\infty),
 \end{equation}
 may be identified with all ways of factoring $f'(t)=g(t)h(t)$ where
 \begin{equation}\label{Eq:eff}
   f(t)\ =\ \prod_{i=1}^{n-1} (t-t_i)
 \end{equation}
 with $\deg g=k{-}1$ and $\deg h = n{-}k{-}1$ are monic.
\end{theorem}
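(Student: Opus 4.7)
The plan is to work in the polynomial model of the Grassmannian: identify $\Gr(k,n)$ with the Grassmannian of $k$-dimensional subspaces of $\C_{n-1}[t]$, normalized so that the osculating flag $\Fdot(t_0)$ corresponds to the order-of-vanishing flag at $t_0\in\C$, and $F_j(\infty)$ to the polynomials of degree at most $j{-}1$. Under this identification, $H\in X_{\Is}(t_i)$ translates to ``$H$ contains a polynomial vanishing to order $\geq k$ at $t_i$'' (equivalently $w_H(t_i)=0$), while $H\in X_{\minusIs}(\infty)$ translates to ``$H$ contains a $(k{-}1)$-dimensional subspace $V\subset\C_{k-1}[t]$''. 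A first step is to verify both translations from~\eqref{Eq:SchubVar}.

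For a solution $H$, the Wronskian $w_H$ vanishes at each $t_i$, and Eisenbud--Harris vanishing of order $|\minusIt|=(k{-}1)(n{-}k{-}1)$ at $\infty$ forces $\deg w_H\leq n{-}1$, so $w_H=c\,f$ for some $c\in\C^{\times}$. Writing $H=V\oplus\C p$, a degree count pins down $\deg p=n{-}1$ and forces the reduced echelon basis of $V$ to have degrees $1,2,\dotsc,k{-}1$; in particular $V$ contains no nonzero constant, and we may take a basis $v_j=t^j+a_j$ for $j=1,\dotsc,k{-}1$ with parameters $a_1,\dotsc,a_{k-1}\in\C$.

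The heart of the argument is an identity for the differential operator $L(y):=\Wr(v_1,\dotsc,v_{k-1},y)$, which by construction satisfies $L(p)=w_H$. Setting $A(t):=\Wr(v_1,\dotsc,v_{k-1})\in\C_{k-1}[t]$ and expanding $L(y)$ along its last column, one sees that the $(i,k)$-minor equals $A^{(k-i)}$; this follows by differentiating $A$ inductively and observing that $v_j^{(d)}=0$ for $d\geq k$ kills all but one contribution at each step. Hence
\[
  L \;=\; \sum_{j=0}^{k-1}(-1)^{j}\,A^{(j)}\,\partial^{\,k-1-j}\,,
\]
and a direct telescoping computation yields
\[
  \bigl(L(p)\bigr)'\;=\;A\,p^{(k)}+(-1)^{k-1}A^{(k)}p\;=\;A\,p^{(k)},
\]
using $A^{(k)}=0$ since $\deg A=k{-}1$. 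I expect this operator identity to be the main obstacle: not deep, but requiring careful index and sign bookkeeping; the small cases $k=2,3,4$ make the pattern transparent.

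Combining the identity with $L(p)=cf$ yields $c\,f'=A\cdot p^{(k)}$, exhibiting a factorization of $c\,f'$ into a degree $k{-}1$ factor $A$ and a degree $n{-}k{-}1$ factor $p^{(k)}$; monic normalization of both gives the factorization $f'=gh$ of the theorem. Conversely, a factorization determines $A$ (hence $V$, since the coefficient map $(a_j)\mapsto A$ is triangular and invertible) and determines $p^{(k)}$, so $p$ is recovered up to an element of $\C_{k-1}[t]$ by $k$-fold integration; reducing $p$ modulo $V$ absorbs $k{-}1$ of these integration constants, and the single remaining constant is pinned down by the equation $L(p)=cf$ itself (not merely its derivative). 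This yields the claimed bijection, in agreement with the count $\binom{n-2}{k-1}$ of Lemma~\ref{L:couting_factorizations}.
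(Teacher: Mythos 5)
Your argument is correct, but it takes a genuinely different route from the paper. The paper introduces explicit matrix local coordinates $H(f,g,h)$ on the Schubert cell of $X_{\minusIs}(\infty)$ and, through the Laplace/Vandermonde computations of Lemma~\ref{L:factorization}, shows the rank condition $H(f,g,h)\in X_{\Is}(t)$ is the vanishing of $f_0+\sum_{i,j}\tfrac{t^{i+j+1}}{i+j+1}g_ih_j$, whose $t$-derivative is visibly $g(t)h(t)$; the bijection with factorizations is then read off the coordinates. You instead work in the polynomial model $\Gr(k,\C_{n-1}[t])$, split a solution $H=V\oplus\C p$ with $V=H\cap\C_{k-1}[t]$, and extract the factorization from the operator identity $\bigl(\Wr(v_1,\dotsc,v_{k-1},p)\bigr)'=\Wr(v_1,\dotsc,v_{k-1})\,p^{(k)}$. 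I checked the minor formula $M_{i,k}=A^{(k-i)}$---valid precisely because $\deg v_j\le k-1$, so $v_j^{(k)}=0$---and the ensuing telescoping; both are right. Your route is more conceptual and makes the origin of $f'$ transparent, at the cost of establishing the passage to the polynomial model and the nondegeneracy of the echelon degrees for a solution in the cell (which you correctly flag as needing proof); the paper's route is more computational but simultaneously exhibits the explicit parametrization used in their experiment. Your recovery of $H$ from a factorization---$V$ from $g$ via the triangular map $(a_j)\mapsto A$, then $p$ modulo $V$ by integration, the one remaining scalar fixed by matching $L(p)$ with $cf$ rather than only their derivatives---is sound and closes the bijection.
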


By this theorem, the number of real solutions to a real osculating instance of the
Schubert problem $(\minusIt,\I^{n-1})$ with osculation type $r_{\Is}$ will be the number of
real factorizations $f'(t)=g(t)h(t)$ where $f(t)$ has exactly $r_{\Is}$ real roots,
$\deg g=n{-}k{-}1$, and $\deg h=k{-}1$.
This counting problem was studied in~\cite[Sect.~7]{SS06}, and we recount it here.
Let $r$ be the number of real roots of $f'(t)$.
By Rolle's Theorem, $r_{\Is}-1\leq r\leq n{-}2$.
Then the number \defcolor{$\nu(k,n,r)$} of such factorizations is the coefficient of
$x^{n-k-1}y^{k-1}$ in $(x+y)^r(x^2+y^2)^c$, where $c=\frac{n-2-r}{2}$ is the number of
irreducible quadratic factors of $f'(t)$.

\begin{corollary}\label{C:factoring}
 The number of real solutions to a real osculating instance of the Schubert problem
 $(\minusIt,\I^{n-1})$~\eqref{Eq:osc_inst} with osculation type $r_{\Is}$ is $\nu(k,n,r)$,
 where $r$ is the number of real roots of $f'(t)$, where $f$ is the
 polynomial~\eqref{Eq:eff}. 
\end{corollary}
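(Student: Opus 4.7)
The plan is to combine Theorem~\ref{Th:factorization} with a direct enumeration of real factorizations. First I would note that because the instance is real, Corollary~\ref{cor:realOscInst} forces the nonreal osculation points to come in complex conjugate pairs, so $f(t)=\prod_i(t-t_i)$ lies in $\R[t]$, whence $f'(t)\in\R[t]$. The real roots of $f'$ number exactly $r$ and its nonreal roots give $c=(n{-}2{-}r)/2$ irreducible quadratic factors over $\R$.

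Next I would argue that the bijection of Theorem~\ref{Th:factorization} is equivariant under complex conjugation. Since $f$ is real, the map sending an $H$ in the intersection~\eqref{Eq:osc_inst} to the ordered factorization $f'=g\cdot h$ (with $\deg g=k{-}1$, $\deg h=n{-}k{-}1$) is defined by operations over $\R$; hence the conjugate of $H$ is sent to the conjugate factorization. Thus $H$ is real if and only if both $g$ and $h$ lie in $\R[t]$. Real solutions therefore correspond bijectively to monic real factorizations $f'=g\cdot h$ of the prescribed degrees.

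To finish, I would count these factorizations. Writing the $\R$-irreducible factorization $f'(t)=\prod_{i=1}^{r}\ell_i(t)\prod_{j=1}^{c}q_j(t)$ with each $\ell_i$ linear and each $q_j$ an irreducible monic quadratic, any real factorization $f'=g\cdot h$ is determined by choosing which $\ell_i$'s and which $q_j$'s are assigned to $g$ (the rest going to $h$). Tracking $\deg g$ by $y$ and $\deg h$ by $x$, each linear factor contributes $(x+y)$ and each quadratic factor contributes $(x^2+y^2)$; thus the number of real factorizations with $\deg g=k{-}1$ and $\deg h=n{-}k{-}1$ equals
\[
   [x^{n-k-1}y^{k-1}]\,(x+y)^r(x^2+y^2)^c\ =\ \nu(k,n,r).
\]

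The only delicate point I foresee is the implicit assumption that $f'$ has $n{-}2$ distinct roots (so that the monic $\R$-irreducible factorization above is squarefree, making the counting unambiguous); this holds for generic choices of osculation points, and for osculating instances giving a transverse intersection---which is the natural regime for the corollary---the factorization data is always squarefree and the count above is exact.
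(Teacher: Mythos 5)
Your proof is correct and follows essentially the same route as the paper: apply Theorem~\ref{Th:factorization} to identify real solutions with real ordered factorizations $f'=gh$ of the prescribed degrees (your conjugation-equivariance step fills in the detail the paper leaves implicit, using that the parameterization $H(f,g,h)$ is defined over $\R$), and then count these by assigning each $\R$-irreducible factor of $f'$ to $g$ or to $h$, giving the coefficient of $x^{n-k-1}y^{k-1}$ in $(x+y)^r(x^2+y^2)^c = \nu(k,n,r)$. Your caveat about $f'$ needing to be squarefree is a genuine (and correct) hypothesis that the paper leaves tacit: distinct $t_i$ do not by themselves force $f'$ to have distinct roots, and the stated count is exact only in the generic, squarefree situation.
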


\begin{remark}
 When $r<n{-}2$, we have that $\nu(k,n,r)\leq\nu(k,n,r{+}2)$, so 
 $\nu(k,n,r_{\Is}{-}1)$ is the lower bound for the number of real solutions to a real
 osculating instance of $(\minusIt,\I^{n-1})$ of osculation type $r_{\Is}$.
 Since at most $\lfloor\frac{n}{2}\rfloor$ different values of $r$ may occur for the
 numbers of real roots of $f'(t)$, the number $\nu(k,n,r)$ satisfies
\[
  \binom{\lfloor\frac{n-2}{2}\rfloor}{\lfloor\frac{k-1}{2}\rfloor,\lfloor\frac{n-k-1}{2}\rfloor}
    \ \leq\ \nu(k,n,r)\ \leq\ 
  \binom{n{-}2}{k{-}1}.\ 
\]
 There will in general be lacunae in the numbers of real solutions, as we saw in
 Table~\ref{Ta:333.1e7=20}. 
 For example, the values of $\nu(5,13,r)$  are
 \begin{equation}\label{Eq:5_13}
   10\,,\ 18\,,\ 38\,,\ 78\,,\ 162\,,\ \mbox{and}\ 330\,.
 \end{equation}
\end{remark}

\begin{remark}
 The Schubert problem $(\minusIt,\I^{2k-1})$ in $\Gr(k,2k)$ is symmetric.
 When $k>2$ it
 satisfies the hypotheses of Proposition~\ref{P:mod4} and so its numbers of real
 solutions (the numbers $\nu(k,2k,r)$) are congruent to $\binom{2k-2}{k-1}$
 modulo four.  We deduce this congruence modulo four from Theorem~\ref{Th:factorization} 
 by proving that the number of nonreal solutions to a real osculating instance
 of such a Schubert problem is a multiple of four.
 Equivalently, given a real polynomial $\phi(t)$ of degree $2m=2k{-}2$ with distinct
 roots, the number of nonreal ordered pairs $(g(t),h(t))$ of polynomials of degree $m$
 with $\phi(t)=g(t)h(t)$ is a multiple of four.

 An ordered pair $(g(t),h(t))$ of polynomials of degree $m$ with
 $\phi(t)=g(t)h(t)$ is an \demph{ordered factorization} of $\phi(t)$.
 Given a factorization $\phi(t)=g(t)h(t)$, we have $\phi(t)=h(t)g(t)$, and so 
 $(g(t),h(t))$ and $(h(t),g(t))$ are distinct ordered factorizations of $\phi(t)$.  
 If $g(t)$ (and hence $h(t)$) is not real, and we do not have $\overline{g(t)}=h(t)$, then 
\[
   (g(t),h(t))\,,\ (h(t),g(t))\,,\ 
   (\overline{g(t)},\overline{h(t)})\,,\ (\overline{h(t)},\overline{g(t)})
\]
 are four distinct nonreal ordered factorizations.

 To show that the set of nonreal ordered factorizations of $\phi(t)$ is divisible by four,
 we need only to show that the number for which $\overline{g(t)}=h(t)$ is a multiple of
 four. 
 These can only occur when $\varphi(t)$ has no real roots, for $g(t)$ must have one root 
 from each complex conjugate pair of roots of $\varphi(t)$.
 There are $2^m$ such pairs and so $2^m$ such factorizations, which is a multiple of four
 when $m>1$ and thus when $k>2$.
 This establishes the congruence modulo four of Proposition~\ref{P:mod4} for osculating
 instances of Schubert problems $(\minusIt,\I^{2k-1})$ in $\Gr(k,2k)$ when $k>2$.

 It is interesting to note that by~\eqref{Eq:5_13}, the number of real solutions to this
 problem in $\Gr(5,13)$ also satisfies a congruence modulo four.
\end{remark}

\begin{proof}[Proof of Theorem~$\ref{Th:factorization}$]
 The Schubert variety $X_{\minusIs}(\infty)$ consists of those $k$-planes $H$ with
\[
   \dim H \cap F_{i+1}(\infty)\ \geq\ i\qquad\mbox{for } i=1,\dotsc,k{-}1\,.
\]
 By Proposition~\ref{P:uniqueness}, the solutions to~\eqref{Eq:osc_inst} will be points in 
 $X_{\minusIs}(\infty)$ that do not lie in any other smaller Schubert variety
 $X_\lambda(\infty)$.
 This is the Schubert cell of $X_{\minusIs}(\infty)$~\cite{Fu97}, and it consists of the
 $k$-planes $H$ which are row spaces of matrices of the form
\[
  \left(\begin{array}{ccccccccc}
    1&x_1&\dotsb&x_{n-k-1}&x_{n-k}&0&\dotsb&0\\
    0&0  &\dotsb&   0   &  1   &x_{n-k+1}&\dotsb&0\\
   \vdots&\vdots&&\vdots&&\ddots &\ddots&\vdots\\
    0&0  &\dotsb&   0   &  \dotsb&0 & 1& x_{n-1}
    \end{array}\right)\ ,
\]
 where $x_1,\dotsc,x_{n-1}$ are indeterminates.
 If $x_{n-k}=0$, then $H\in X_{\Is}(0)$, but if one of $x_{n-k+1},\dotsc,x_{n-1}$
 vanishes, then $H\in X_{\IIs}(0)$, which cannot occur for a solution
 to~\eqref{Eq:osc_inst}, again by Proposition~\ref{P:uniqueness}.

 We use a slight change from these coordinates.
 Define constants  $\defcolor{g_{n-k-1}}:=1=:\defcolor{h_{k-1}}$  and  
 $\defcolor{c_i}:=(-1)^{n-k-i+1}(n{-}k{-}i)!$ 
 and let $(f,g,h)=(f_0,\,g_0,\dotsc,g_{n-k-1},\,h_0,\dotsc,h_{k-2})$ be variables
 with $h_0,\dotsc,h_{k-2}$ all nonzero.
 If we let \defcolor{$H(f,g,h)$} be the row space of the following matrix (also written $H(f,g,h)$):

 \begin{equation}\label{Eq:coords}
   \left(\begin{array}{ccccccccccc}
     c_1g_{n-k-1} & c_2g_{n-k-2} &  \dotsb & c_{n-k}g_0 &
         \frac{f_0}{h_0} & 0  &\dotsb&0&0\\
     0 &  0  & \dotsb & 0     &-1 &\frac{h_0}{h_1}&\dotsb&0&0\\
     0 &  0  & \dotsb & 0     & 0 &-2 &\ddots&\vdots&\vdots\\
    \vdots&\vdots&&\vdots&\vdots&&\ddots&&0\\
     0 &  0  &  \dotsb & 0     & 0 &\dotsb&-(k{-}2)&\frac{h_{k-3}}{h_{k-2}} &0\\
     0 &  0  &  \dotsb & 0     & 0 &\dotsb&0&-(k{-}1)&\frac{h_{k-2}}{h_{k-1}}\\
   \end{array}\right)\ ,
 \end{equation}
then $H(f,g,h)$ parameterizes the Schubert cell of $X_{\minusIs}(\infty)$.
We postpone the following calculation.

\begin{lemma}\label{L:factorization}
 The condition for $H(f,g,h)$ to lie in $X_{\Is}(t)$  is
 \begin{equation}\label{Eq:big_det}
  \det \left(\begin{array}{c}H(f,g,h)\\F_{n-k}(t)\end{array}\right)\ 
      \ =\ 
    (-1)^{k(n-k)}\biggl(\, \sum_{i=0}^{n-k-1} \sum_{j=0}^{k-1} 
     \frac{t^{i+j+1}}{i{+}j{+}1} \, g_i\, h_j
      \ +\ f_0 \,\biggr)\,.
 \end{equation}
\end{lemma}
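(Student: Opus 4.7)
The plan is to treat $P(t) := \det \left(\begin{array}{c}H(f,g,h)\\F_{n-k}(t)\end{array}\right)$ as a polynomial in $t$ and to establish the claimed identity by computing $P(0)$ and $P'(t)$ separately. Writing $g(t) = \sum_{i=0}^{n-k-1} g_i t^i$ and $h(t) = \sum_{j=0}^{k-1} h_j t^j$, the right-hand side equals $(-1)^{k(n-k)} \bigl(f_0 + \int_0^t g(s) h(s)\,ds\bigr)$, so it suffices to show $P(0) = (-1)^{k(n-k)} f_0$ and $P'(t) = (-1)^{k(n-k)} g(t) h(t)$ and integrate.

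For the constant term: at $t = 0$, $F_{n-k}(0) = (I_{n-k} \mid 0)$. In the Laplace expansion of $P(0)$ along the first $n-k$ columns, any term that uses row 1 of $H(f,g,h)$ in the first block is multiplied by a complementary minor containing a row $\gamma^{(a-1)}(0)|_{\text{cols }n-k+1..n} = 0$, hence vanishes. The only surviving term uses all rows of $F_{n-k}(0)$, contributing the minor $\det I_{n-k} = 1$ times the complementary $k \times k$ submatrix (rows of $H(f,g,h)$, columns $n-k+1, \ldots, n$), which is lower triangular with diagonal $\tfrac{f_0}{h_0}, \tfrac{h_0}{h_1}, \ldots, \tfrac{h_{k-2}}{h_{k-1}}$; these telescope (since $h_{k-1} = 1$) to $f_0$. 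A direct sign computation shows the Laplace sign simplifies to $(-1)^{k(n-k)}$.

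For the derivative: row-multilinearity of the determinant and the $t$-independence of $H(f,g,h)$ give $P'(t) = \sum_{j=1}^{n-k} \det M_j$, where $M_j$ replaces row $\gamma^{(j-1)}(t)$ by $\gamma^{(j)}(t)$. For $j < n-k$ the new row coincides with another row of $M_j$, so $\det M_j = 0$; only $M_{n-k}$ contributes. Since $\gamma^{(n-k)}(t)$ vanishes in the first $n-k$ columns, exactly $n-k$ rows of $M_{n-k}$ (namely row 1 of $H(f,g,h)$ together with $\gamma^{(0)}, \ldots, \gamma^{(n-k-2)}$) are non-zero there, so Laplace expansion along those columns yields a single term $(\mathrm{sign}) \cdot \det N_1 \cdot \det N_2$, where $N_1$ is the top-left $(n-k) \times (n-k)$ block described and $N_2$ is the $k \times k$ complementary block on rows 2 through $k$ of $H(f,g,h)$ together with $\gamma^{(n-k)}(t)$, restricted to columns $n-k+1, \ldots, n$.

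To compute $\det N_1 = (-1)^{n-k} g(t)$, I use the identity $G(t)^{-1} = G(-t)$, where $G(t) := F_{n-k}(t)|_{\text{cols }1..n-k}$ is upper triangular with unit diagonal; this follows immediately from the group law $G(t_1)G(t_2) = G(t_1+t_2)$ (an application of the binomial theorem). Expanding the first row of $N_1$ in the basis formed by the rows $\gamma^{(a-1)}(t)|_{\text{cols }1..n-k}$ of $G(t)$, only the coefficient of $\gamma^{(n-k-1)}$ contributes to $\det N_1$ (the others produce repeated rows), and this coefficient is obtained from the last row of $G(-t)$; after substituting $c_j = (-1)^{n-k-j+1}(n-k-j)!$, the coefficient simplifies to $-g(t)$, and a sign adjustment for the row reordering gives $\det N_1 = (-1)^{n-k} g(t)$. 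To compute $\det N_2 = (-1)^{k-1} h(t)$, I expand along its bottom row $\bigl(1, t, \ldots, \tfrac{t^{k-1}}{(k-1)!}\bigr)$: removing the bottom row and column $j$ from the bidiagonal block produces a block-diagonal matrix whose determinant is $(-1)^{j-1}(j-1)!\,h_{j-1}$, and the sum collapses to $(-1)^{k-1}h(t)$. Combining with the Laplace sign, and using that $(n-k)(n-k+1)$ is even to simplify the total parity, yields $P'(t) = (-1)^{k(n-k)} g(t) h(t)$. The main obstacle is careful sign bookkeeping through the nested Laplace expansions; the group identity $G(t)^{-1} = G(-t)$ is the key algebraic fact that makes the $\det N_1$ computation transparent.
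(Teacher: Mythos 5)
Your argument is correct, and it takes a genuinely different route than the paper's. The paper proves Lemma~\ref{L:factorization} by a direct generalized Laplace expansion of the $n\times n$ determinant into a sum over $\alpha\in\binom{[n]}{k}$ of $H$-minors times $F_{n-k}(t)$-minors, identifies which $H$-minors are nonzero, and evaluates the $F$-minors by a Vandermonde determinant computation (normalizing the entries $t^{\alpha_i-j}/(\alpha_i-j)!$, recognizing a Vandermonde, and working out $|\alpha|$, $\alpha!$, and $\Vdm(\alpha)$ for each relevant $\alpha$). You instead exploit the $t$-dependence: since only the $F_{n-k}(t)$ rows depend on $t$, row-multilinearity makes $P'(t)$ a sum of determinants all but one of which have a repeated row, reducing the derivative to a single product of two small determinants; the group law $G(t_1)G(t_2)=G(t_1+t_2)$ (hence $G(t)^{-1}=G(-t)$) replaces the Vandermonde computation in extracting $\det N_1 = (-1)^{n-k}g(t)$, and a straightforward bidiagonal-cofactor calculation gives $\det N_2 = (-1)^{k-1}h(t)$; the constant term $P(0)=(-1)^{k(n-k)}f_0$ falls out since $F_{n-k}(0)=(I_{n-k}\mid 0)$ makes the complementary $k\times k$ block lower-bidiagonal with telescoping diagonal. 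Integrating $P'$ then recovers the double sum. Your version avoids the case analysis over $\alpha$, the Vandermonde normalization, and the explicit $|\alpha|$, $\alpha!$, $\Vdm$ bookkeeping, at the cost of a separate (but easy) evaluation at $t=0$; the sign bookkeeping you flag as the main obstacle is correct and does simplify using $(n-k)(n-k+1)\equiv 0\pmod 2$, exactly as you say.
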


Call the polynomial in the parentheses $f(t)$.
If $H$ lies in the intersection~\eqref{Eq:osc_inst}, then $(-1)^{k(n-k)}f$ is the
polynomial~\eqref{Eq:eff}. 
If we set
\[
  g(t) :=\ g_0+tg_1+\dotsb+t^{n-k-1}g_{n-k-1}
   \qquad\mbox{and}\qquad
  h(t) :=\ h_0+th_1+\dotsb+t^{k-1}h_{k-1}\,,
\]
then we have $f(0)=f_0$ and $f'(t)=g(t)h(t)$.
Theorem~\ref{Th:factorization} is now immediate.
\end{proof}

\begin{proof}[Proof of Lemma~\ref{L:factorization}]
 Expand the determinant~\eqref{Eq:big_det} along the rows of $H(f,g,h)$ 
 to obtain
 \begin{equation}\label{Eq:det_cond}
   \det\left(\begin{array}{c}H(f,g,h)\\ F_{n-k}(t)\end{array}\right)\ =\ 
   \sum_{\alpha\in\binom{[n]}{k}}
    (-1)^{|\alpha|} H(f,g,h)_{\alpha} (F_{n-k}(t))_{\alpha^c}\,,
 \end{equation}
where $\binom{[n]}{k}$ is the collection of subsets of $\{1,\dotsc,n\}$ of
cardinality $k$, $\defcolor{|\alpha|}:=\alpha_1+\dotsb+\alpha_{k}-1-\dotsb-k$,
\defcolor{$H_\alpha$} is the determinant of the $k\times k$ submatrix of $H$ given
by the columns in $\alpha$, and \defcolor{$(F_{n-k}(t))_{\alpha^c}$} is the determinant of the
$(n{-}k)\times(n{-}k)$ submatrix of $F_{n-k}(t)$ formed by the columns in
$\alpha^c:=\{1,\dotsc,n\}\smallsetminus\alpha$. 
These are \demph{minors} of $H(f,g,h)$ and $F_{n-k}(t)$.

A minor $H(f,g,h)_\alpha$ of $H(f,g,h)$ is nonzero only if 
$\alpha=(i,n{-}k{+}1,\dotsc,\widehat{n{-}k{+}j},\dotsc,n)$ for $i\in\{1,\dotsc,n{-}k\}$
and $j\in\{1,\dotsc,k\}$ or $\alpha=(n{-}k{+}1,\dotsc,n{-}1,n)$, the last $k$ columns.
Write \defcolor{$[i,\widehat{j}]$} for the first type and \demph{$[n{-}k]^c$} for the
second. 
Then  $([n{-}k]^c)^c=(1,\dotsc,n{-}k)$ and 
$\defcolor{[\widehat{i},j]}:=[i,\widehat{j}]^c
     =(1,\dotsc,\widehat{i},\dotsc,n{-}k,n{-}k{+}j)$.

With $g_{n-k-1}=1=h_{k-1}$, a calculation shows that
\[
   H_{[i,\widehat{j}]}\ =\ (-1)^{n-k-i+j}(n{-}k{-}i)!\,(j{-}1)!\,g_{n-k-i} \,  h_{j-1}
   \quad\mbox{and}\quad
   H_{[n-k]^c}\ =\ f_0\,.
\]  
Similarly, for any $\alpha\in\binom{[n]}{n-k}$, we use~\eqref{Eq:matrixF} to compute
the minor $(F_{n-k}(t))_\alpha$,
 \begin{multline*}
 \qquad \det \left(\begin{array}{ccc}
    \frac{t^{\alpha_1-1}}{(\alpha_1-1)!}&\dotsb&\frac{t^{\alpha_{n-k}-1}}{(\alpha_{n-k}-1)!}\\
      \vdots&\ddots&\vdots\\
     \frac{t^{\alpha_1-(n-k)}}{(\alpha_1-(n-k))!}&\dotsb&
     \frac{t^{\alpha_{n-k}-(n-k)}}{(\alpha_{n-k}-(n-k))!}\end{array}\right)\\
    \ =\ 
    \frac{t^{|\alpha|}}{(\alpha_1-1)!\dotsb(\alpha_{n-k}-1)!}
      \det \left(\begin{array}{ccc}
     1&\dotsb&1\\
     \alpha_1-1&\dotsb&\alpha_{n-k}-1\\
      \vdots&\ddots&\vdots\\
      (\alpha_1-1)_{n-k-1}&\dotsb& (\alpha_{n-k}-1)_{n-k-1}\end{array}\right)\ ,\qquad\\
    \ =\ 
    \frac{t^{|\alpha|}}{(\alpha_1-1)!\dotsb(\alpha_{n-k}-1)!}
      \det \left(\begin{array}{ccc}
     1&\dotsb&1\\
     \alpha_1&\dotsb&\alpha_{n-k}\\
      \vdots&\ddots&\vdots\\
      \alpha_1^{n-k-1}&\dotsb& \alpha_{n-k}^{n-k-1}\end{array}\right)\ ,\qquad\\
 \end{multline*}
   where $\defcolor{(m)_i}:=m(m-1)\dotsb(m{-}i{+}1)$ and an entry in the first matrix is
   zero if $\alpha_i-j<0$.
   If $\defcolor{\alpha!}:=(\alpha_1-1)!\dotsb(\alpha_{n-k}-1)!$, then
\[
   (F_{n-k}(t))_\alpha\ =\ \frac{t^{|\alpha|}}{\alpha!}\prod_{i<j}(\alpha_j-\alpha_i)
    \ =\ \frac{t^{|\alpha|}}{\alpha!}\Vdm(\alpha)\,,
\]
where \defcolor{$\Vdm(\alpha)$} is the Vandermonde determinant of $\alpha$.
We compute
 \begin{eqnarray*}
  |[i,\widehat{j}]|&=& i+n{-}k{+}1+\dotsb+n - (n{-}k{+}j)-1-\dotsb-k\\ &=&
                         k(n{-}k)-(n{-}k{-}i{+}j)\\
  |[\widehat{i},j]|&=& 1+\dotsb+n{-}k+n{-}k{+}j\ -\ 1-\dotsb-k\ =\ 
                          n{-}k{-}i{+}j\\
   ([\widehat{i},j])!&=& 
         1!\cdot 2!\dotsb(i-2)!i!(i+1)!\dotsb(n{-}k{-}1)!(n{-}k{+}j{-}1)!\\
   \Vdm([\widehat{i},j])&=&  1!\cdot 2!\dotsb(i-2)!\frac{i!}{1}\frac{(i+1)!}{2}\dotsb
                  \frac{(n{-}k{-}1)!}{n{-}k{-}i}\cdot
       \frac{(n{-}k{+}j{-}1)!}{(j-1)!}\cdot\frac{1}{n{-}k{-}i{+}j}\\
   &=& ([\widehat{i},j])!\cdot \frac{1}{(n{-}k{-}i)!(j{-}1)!(n{-}k{-}i{+}j)}\,,
 \end{eqnarray*}
and
\[
   |[n{-}k]^c|\ =\ k(n{-}k)\,,\quad
   [n{-}k]!\ =\ \Vdm([n{-}k])\ =\ 1!\cdot 2! \dotsb (n{-}k{-}1)!\,.
\]
After some cancellation, the determinant~\eqref{Eq:det_cond} becomes 
 \begin{multline*}
   \sum_{i=1}^{n-k} \sum_{j=1}^{k} 
     (-1)^{|[i,\widehat{j}]|} H(f,g,h)_{[i,\widehat{j}]}(F_{n-k}(t))_{[\widehat{i},j]}
       \ \ +\ (-1)^{|[n-k]^c|}H(f,g,h)_{[n-k]^c} (F_{n-k}(t))_{[n-k]}\\
   =\ \ 
    \sum_{i=1}^{n-k} \sum_{j=1}^{k} 
      (-1)^{k(n-k)} \frac{t^{n-k-i+j}}{n{-}k{-}i{+}j} g_{n-k-i} h_{j-1}
      \ \ +\ (-1)^{k(n-k)} f_0\,.
 \end{multline*}
 If we replace $n{-}k{-}i$ by $i$ and $j{-}1$ by $j$ in this sum, we get
\[
   \det\left(\begin{array}{c}H(f,g,h)\\ F_{n-k}(t)\end{array}\right)
   \ =\ 
    (-1)^{k(n-k)}\Bigl(\, \sum_{i=0}^{n-k-1} \sum_{j=0}^{k-1} 
     \frac{t^{i+j+1}}{i{+}j{+}1} \, g_i\, h_j
      \ +\ f_0 \,\Bigr)\,,
\]
which completes the proof.
\end{proof}

%
\section{More tables from the experiment}\label{S:further}

We present a selection of the tables of real osculating instances of Schubert problems
studied in~\cite{Orig_Lower} and~\cite{Lower_Exp}.
These exhibit intriguing structures in their numbers of real solutions, some of
which we understand, and some that we do not.

\subsection{An enigma.}
Table~\ref{Ta:311.21e3.1e5=54} shows what is perhaps the most complicated structure
we observed.
\begin{table}[htb]
 \caption{Frequency table for 
        $\ThIIb\cdot\TI^3\cdot\I^2 = 54$ in $\Gr(4,8)$}
  \label{Ta:311.21e3.1e5=54}
  \begin{tabular}{|c|c||r|r|r|r|r|r|r|r|r}\hline
   \multirow{2}{*}{$r_{\TIs}$}&
   \multirow{2}{*}{$r_{\Is}$}&
   \multicolumn{8}{c}{\textbf{Number of Real Solutions}}&\\\cline{3-11}
   &&$\boldsymbol{0}$&$\boldsymbol{2}$&$\boldsymbol{4}$&$\boldsymbol{6}$&
    $\boldsymbol{8}$&$\boldsymbol{10}$&$\boldsymbol{12}$&
    $\boldsymbol{14}$&$\dotsb$\\\hline\hline
   3&2&&&&&&&&&$\dotsb$\\\hline
   3&0&5&2714&13044&111636&59800&88674&20255&52088&$\dotsb$\\\hline
   1&2&81216&235048&72682&109908&9600&52281&2877&12685&$\dotsb$\\\hline
   1&0&&599421&&83350&&53394&&20997&$\dotsb$\\\hline 
  \end{tabular}\bigskip \newline
  \begin{tabular}{r|r|r|r|r|r|r|r|r|r|r|r}\hline
    \multicolumn{11}{c}{\textbf{Number of Real Solutions}}&\\\hline
   $\dotsb$&$\boldsymbol{16}$&$\boldsymbol{18}$&$\boldsymbol{20}$&$\boldsymbol{22}$&
    $\boldsymbol{24}$&$\boldsymbol{26}$&$\boldsymbol{28}$&$\boldsymbol{30}$&
    $\boldsymbol{32}$&$\boldsymbol{34}$&
     $\dotsb$\\\hline\hline
   $\dotsb$&&&&&&&&&&&$\dotsb$\\\hline
   $\dotsb$&44306&164085&9467&23019&5222&27149&5044&16959&1107&6336&$\dotsb$\\\hline
   $\dotsb$&4953&31084&10&50198&&166418&&&&&$\dotsb$\\\hline
   $\dotsb$&&20896&&16359&&34543&&&&&$\dotsb$\\\hline
  \end{tabular}\bigskip\newline
  \begin{tabular}{r|r|r|r|r|r|r|r|r|r|r||r|}\hline
    \multicolumn{11}{c||}{\textbf{Number of Real Solutions}}&
   \multirow{2}{*}{\textbf{Total}}\\\cline{1-11}
    $\dotsb$&$\boldsymbol{36}$&$\boldsymbol{38}$&$\boldsymbol{40}$&$\boldsymbol{42}$&
    $\boldsymbol{44}$&$\boldsymbol{46}$&$\boldsymbol{48}$&
    $\boldsymbol{50}$&$\boldsymbol{52}$&$\boldsymbol{54}$&\\\hline\hline
   $\dotsb$&&&&&&&&&&828960&$\boldsymbol{828960}$\\\hline
   $\dotsb$&1280&15495&1731&13362&240&6292&&35970&1275&102406&$\boldsymbol{828960}$\\\hline
   $\dotsb$&&&&&&&&&&&$\boldsymbol{828960}$\\\hline
   $\dotsb$&&&&&&&&&&&$\boldsymbol{828960}$\\\hline
  \end{tabular}
\end{table}
This used $24.6$ gigahertz-years of computing.
The first row has only real points of osculation, so the Mukhin-Tarasov-Varchenko Theorem
implies that all 54 solutions are real, as observed. 
All possible numbers of real solutions except 48 were observed for the osculation type of
the  second row.
The third and fourth rows appear to have an upper bound of 26, and the fourth row
exhibits an additional congruence to 54 modulo four.
None of this, besides the first row, is understood.
Compare the upper bound for the last two rows to that in Table~\ref{Ta:21e2.1e3=6} (note
that $2= 2\cdot\lfloor 6/4\rfloor$ and $26= 2\cdot\lfloor 54/4\rfloor$).
A similar structure was also observed for the Schubert problem
$\ThIIs\cdot\TTs\cdot\TIs^2\cdot\I=16$ in $\Gr(4,8)$.

\subsection{Internal structure.}

Work of Vakil~\cite{Va06b} and others has led to the study of Schubert
problems 
which posses internal structure as encoded by their Galois groups~\cite{Ha79}.
The current state of this investigation is discussed in~\cite[\S~5]{MSJ}.
Intriguingly, in every problem we know whose Galois group is not the full symmetric group,
the internal structure which restricts the Galois group appears to restrict the
possible numbers of real solutions to real osculating instances.

A good example is the Schubert problem 
$\F^2\cdot\IIIs^2\cdot\I^6 = 10$ in $\Gr(4,9)$ of Table~\ref{Ta:4e2111e21e6=10}, which
used $4.14$ gigahertz-years of computing.
The problem is solved by first solving an instance of the problem of four lines ($\I^4=2$)
in a $\Gr(2,4)$ that is given by the four conditions $\F^2\cdot\IIIs^2$.
%
\begin{table}[htb]
 \caption{Frequency table for $\F^2\cdot\III^2\cdot\I^6 = 10$ in $\Gr(4,9)$}
  \label{Ta:4e2111e21e6=10}
    \begin{tabular}{|c|c|c||r|r|r|r|r|r||r|}\hline
     \multirow{2}{*}{$r_{\Fs}$}&
     \multirow{2}{*}{$r_{\IIIs}$}&
     \multirow{2}{*}{$r_{\Is}$}&
     \multicolumn{6}{c||}{\textbf{\# Real Solutions}}&
     \multirow{2}{*}{\textbf{Total}}\\\cline{4-9}
      &&&$\boldsymbol{0}$&$\boldsymbol{2}$&$\boldsymbol{4}$&
         $\boldsymbol{6}$&$\boldsymbol{8}$&$\boldsymbol{10}$&\\\hline\hline
     2&2&6&    &    &&    &&8000&\textbf{8000}\\\hline
     2&0&6&5419&    &&    &&2581&\textbf{8000}\\\hline
     0&2&6&2586&    &&    &&5414&\textbf{8000}\\\hline
     0&0&6&    &    &&    &&8000&\textbf{8000}\\\hline
     2&2&4&    &2971&&2202&&2827&\textbf{8000}\\\hline
     2&0&4&5508& 876&& 722&& 894&\textbf{8000}\\\hline
     0&2&4&2469&2051&&1527&&1953&\textbf{8000}\\\hline
     0&0&4&    &2941&&2228&&2831&\textbf{8000}\\\hline
     2&2&2&    &3595&&3374&&1031&\textbf{8000}\\\hline
     2&0&2&5535&1090&&1051&& 324&\textbf{8000}\\\hline
     0&2&2&2539&2472&&2254&& 735&\textbf{8000}\\\hline
     0&0&2&    &3572&&3411&&1017&\textbf{8000}\\\hline
     2&2&0&    &    &&7287&& 713&\textbf{8000}\\\hline
     2&0&0&5378&    &&2386&& 236&\textbf{8000}\\\hline
     0&2&0&2619&    &&4917&& 464&\textbf{8000}\\\hline
     0&0&0&    &    &&7333&& 667&\textbf{8000}\\\hline
    \end{tabular}

\end{table}
Then, for each of the two solutions to that problem, an instance of the Schubert problem
$\I^6=5$ in $\Gr(2,5)$ is solved, to get 10 solutions in all.
The Galois group of this problem permutes each of these blocks of
five solutions for the two Schubert problems $\I^6=5$ of the second step, and thus 
it acts imprimitively.
Further analysis shows that the Galois group is the wreath product $S_5\wr S_2$,
which has order $(5!)^2\cdot 2!=28800$.

Table~\ref{Ta:two_tables} shows the frequency tables for the two auxiliary problems
$\I^4=2$ in $\Gr(2,4)$ and $\I^6=5$ in $\Gr(2,5)$, which used $12.6$ gigahertz-hours of
computing. 
%
\begin{table}[htb]
 \caption{Frequency tables for $\I^4=2$ and $\I^6=5$.}
  \label{Ta:two_tables}
    \begin{tabular}{|c||r|r||r|}\hline
     $r_{\Is}$&$\boldsymbol{0}$&$\boldsymbol{2}$&\textbf{Total}\\\hline\hline
     4&&100000&\textbf{100000}\\\hline
     2&32412&67588&\textbf{100000}\\\hline
     0&&100000&\textbf{100000}\\\hline
     \multicolumn{4}{c}{\mbox{\qquad}}
    \end{tabular}
    \hspace{1cm}
    \begin{tabular}{|c||r|r|r||r|}\hline
     $r_{\Is}$&$\boldsymbol{1}$&$\boldsymbol{3}$&
              $\boldsymbol{5}$&\textbf{Total}\\\hline\hline
     6&&&100000&100000\\\hline
     4&36970&36970&35314&\textbf{100000}\\\hline
     2&35314&43081&11222&\textbf{100000}\\\hline
     0&&89105&10895&\textbf{100000}\\\hline
    \end{tabular}
\end{table}
It is fascinating to compare these to Table~\ref{Ta:4e2111e21e6=10}.
First observe that for $\F^2\cdot\IIIs^2\cdot\I^6$ we have no real solutions only when
$r_{\Ths}+r_{\IIIs}=2$, similar to $\I^4$ having no real solutions only when $r_{\Is}=2$.
The remaining columns of Table~\ref{Ta:4e2111e21e6=10} have the same pattern of dependence
on $r_{\Is}$ as do the columns of $\I^6=5$, except that the number of real solutions is
doubled.

\subsection{Problems of the form $(\lambda,\lambda,\lambda,\lambda)$.}

When the Schubert problem has the form $\blambda=\lambda^4$, there are three osculation
types, $r_\lambda=4$, $r_\lambda=2$, and $r_\lambda=0$.
Every Schubert problem of this type we studied has interesting structure in its numbers of
real solutions.
Table~\ref{Ta:21e4=8} 
%
\begin{table}[htb]
  \caption{Frequency table for $\TI^4=8$ in $\Gr(3,7)$.}
  \label{Ta:21e4=8}
 \begin{tabular}{|c||r|r|r|r|r||r|}\hline
   \multirow{2}{*}{$r_{\TIs}$}&
   \multicolumn{5}{c||}{\textbf{\# Real Solutions}}&
   \multirow{2}{*}{\textbf{Total}}\\\cline{2-6}
   &$\boldsymbol{0}$&$\boldsymbol{2}$&$\boldsymbol{4}$&
     $\boldsymbol{6}$&$\boldsymbol{8}$&\\\hline\hline
   \textbf{4}&&&&&10000&\textbf{10000}\\\hline
   \textbf{2}&3590&292&6118&&&\textbf{10000}\\\hline
   \textbf{0}&&&10000&&&\textbf{10000}\\\hline 
 \end{tabular}
\end{table}
shows the results for the Schubert problem $\TIs^4=8$ in $\Gr(3,7)$.
The structure of this table is similar to Table~\ref{Ta:31e4=9} for the Schubert problem
$\ThI^4=9$ in $\Gr(4,8)$. 
%
\begin{table}[htb]
 \caption{Frequency table for $\ThI^4=9$ in $\Gr(4,8)$.}
  \label{Ta:31e4=9}
 \begin{tabular}{|c||r|r|r|r|r||r|}\hline
   \multirow{2}{*}{$r_{\ThIs}$}&
   \multicolumn{5}{c||}{\textbf{\# Real Solutions}}&
   \multirow{2}{*}{\textbf{Total}}\\\cline{2-6}
   &$\boldsymbol{1}$&$\boldsymbol{3}$&$\boldsymbol{5}$&
     $\boldsymbol{7}$&$\boldsymbol{9}$&\\\hline\hline
   \textbf{4}&&&&&7500&\textbf{7500}\\\hline
   \textbf{2}&4995&13&4692&&&\textbf{7500}\\\hline
   \textbf{0}&&&7500&&&\textbf{7500}\\\hline 
 \end{tabular}
\end{table}
Both of these were computed in~\cite{Orig_Lower} which inspired
the more comprehensive experiment we have been discussing.

Understanding these tables led Purbhoo~\cite{Pu13} to study the number fixed points in a
fiber of the Wronski map under the action of a cyclic or dihedral group.
His Theorem 3.15 gives a formula for the number of real solutions to instances of
Schubert problems $(\lambda,\lambda,\lambda,\lambda)$ with osculation type $r_\lambda=0$.
The number of complex solutions to this problems is a particular set of Young tableaux,
and Purbhoo's formula is the number of these Young tableaux that are fixed under an
involution based on tableaux switching~\cite{BSS}. 
Example 3.16 of~\cite{Pu13} gives the computation that this number is 4 for $\TIs^4=8$ in
$\Gr(3,7)$, as we saw in Table~\ref{Ta:21e4=8}.
Similarly, it is an exercise that this number is 5 for $\ThIs^4=9$ in $\Gr(4,8)$.

Purbhoo's result may be applied to Schubert problems in the family of Schubert
problems $(a,0)^4=a{+}1$ in $\Gr(2,2a)$ when $r_{(a,0)}=0$.
As we see for the Schubert problem $\I^4=2$ of Table~\ref{Ta:two_tables} and 
$\Th^4=4$ in  $\Gr(2,8)$ of Table~\ref{Ta:3e4=4}, 
%
\begin{table}[htb]
 \caption{Frequency tables for $\Th^4 = 4$ in $\Gr(2,8)$.}
  \label{Ta:3e4=4}

 \begin{tabular}{|c||r|r|r||r|}\hline
   \multirow{2}{*}{$r_{\Ths}$}&
   \multicolumn{3}{c||}{\textbf{\# Real Solutions}}&
   \multirow{2}{*}{\textbf{Total}}\\\cline{2-4}
   &$\boldsymbol{0}$&$\boldsymbol{2}$&$\boldsymbol{4}$&\\\hline\hline
   \textbf{4}&&&200000&\textbf{200000}\\\hline
   \textbf{2}&32765&103284&63951&\textbf{200000}\\\hline
   \textbf{0}&&&200000&\textbf{200000}\\\hline 
 \end{tabular}
\end{table}
when there are no real points of osculation, these problems appear to have $a{+}1$ real
solutions.
That is in fact always the case, as we now explain.

The solutions to $(a,0)^4=a{+}1$ are enumerated by Young tableaux
of shape $(2a,2a)$ filled with $a$ copies of each of the numbers $1$, $2$, $3$, and $4$.
Since the numbers $1$ must fill the first $a$ positions in the first row and the numbers
$4$ must fill the last $a$ positions in the second row, the only choice is how many
numbers 2 are in the first row.
There are $a{+}1$ choices, so there are $a{+}1$ such tableaux.
Here are the four tableaux for the problem $\Th^4=4$.
\[
  \TheFo{2}{2}{2}{3}{3}{3}\qquad 
  \TheFo{2}{2}{3}{2}{3}{3}\qquad
  \TheFo{2}{3}{3}{2}{2}{3}\qquad
  \TheFo{3}{3}{3}{2}{2}{2}\,.
\]
Purbhoo's switching involution switches the subtableaux consisting of the 1s with that of 
the 2s, and that of the 3s with that of the 4s.
However, the properties of switching (see~\cite{BSS} or~\cite{Pu13}) imply that every such
tableaux is fixed under this involution, which implies that all solutions will be real for
$(a,0)^4=a{+}1$ with osculation type $r_{(a,0)}=0$.

Despite this understanding for $r_\lambda=4$ and $r_\lambda=0$, we do not understand the
possible numbers of real solutions when $r_\lambda=2$ for Schubert problems
$(\lambda,\lambda,\lambda,\lambda)$.


\providecommand{\bysame}{\leavevmode\hbox to3em{\hrulefill}\thinspace}
\providecommand{\MR}{\relax\ifhmode\unskip\space\fi MR }
\providecommand{\MRhref}[2]{%
  \href{http://www.ams.org/mathscinet-getitem?mr=#1}{#2}
}
\providecommand{\href}[2]{#2}


\end{document}